\newcounter{main}
\newtheorem{theorem}{Theorem}[section]
\newtheorem{proposition}[theorem]{Proposition}
\newtheorem{lemma}[theorem]{Lemma}
\newtheorem{corollary}[theorem]{Corollary}
\newtheorem{remark}{Remark}[section]
\newcommand{\blanksquare}{\,\,\,$\sqcup\!\!\!\!\sqcap$}
\newenvironment{proof}{{\flushleft {\bf Proof: }}}{\blanksquare}
\newcounter{example}
{{\stepcounter{example}}{\flushleft {\bf Example \arabic{example}:}}}%
{\par}
\title{\textbf{On $C^1$-robust transitivity of volume-preserving flows}}
\author{ M\'{a}rio Bessa \thanks{Supported by FCT-FSE, SFRH/BPD/20890/2004. }
\space and Jorge Rocha \thanks{Partially supported by FCT-POCTI/MAT/61237/2004.}}
\begin{document}
\maketitle

\begin{abstract}
We prove that a  divergence-free and $C^1$-robustly transitive vector field has no singularities. Moreover, if the vector field is $C^4$ then the linear Poincar\'e flow associated to it admits a dominated splitting over $M$.

\end{abstract}

\bigskip

\noindent\emph{MSC 2000:} primary 37D30, 37D25; secondary 37A99.\\
\emph{keywords:} Volume-preserving flows; Robust transitivity; Dominated splitting; Ergodicity.\\

\begin{section}{Introduction and statement of the\\ results}

It is well known that, in the $C^1$-topology, robust transitivity of a dyna\-mi\-cal system defined on a compact manifold always implies some form of (weak) hyperbolicity. In fact in the early 1980s Ma\~n\'e (\cite{M4}) proved that a $C^1$-robustly transitive two-dimensional diffeomorphism is uniformly hyperbolic. Ma\~n\'e's Theorem was generalized first   by D\'\i az, Pujals and Ures (\cite{DPU}) showing that $C^1$-robustly transitive three-dimensional diffeomorphism are partially hyperbolic, and then by Bonatti, D\'\i az and Pujals (\cite{BDP}) obtaining that a $C^1$-robustly transitive  diffeomorphism has dominated splitting. In the symplectomorphism case Horita and Tahzibi (\cite{HT}) showed that $C^1$-robust transitivity implies partial hyperbolicity in any dimension.

Concerning the vector field context Doering (\cite{D}) transposed Ma\~n\'e's result to three-dimensional flows. Then,   generalizing this result, Vivier (\cite{V}) showed that, in any dimension, $C^1$-robustly transitive vector fields do not have singularities, and Bonatti, Gourmelon and Vivier (\cite{BGV}) proved that the linear Poincar\'e flow of a $C^1$-robustly transitive vector field admits a dominated splitting. In the three-dimensional and volume-preserving case, Arbieto and Matheus (\cite{AM}) showed that a $C^1$-robustly transitive vector field is Anosov. Finally, Vivier (\cite{V2}) proved that any Hamiltonian vector field defined on a four-dimensional  sympletic manifold and admitting a robustly transitive regular energy surface is hyperbolic on this energy surface. 

In this paper we consider the conservative flows setting (or, equivalently, the divergence-free vector fields scenario)  and  obtain the same kind of results of Vivier and of Bonatti, Gourmelon and Vivier mentioned above. Concerning the ergodic theoretical point of view we mention that, using the Ma\~n\'e, Bochi and Viana strategies (\cite{M2} and~\cite{BV}), in ~\cite{B1} is proved  that generically conservative linear differential systems have, for almost every point, zero Lyapunov exponents or else a dominated splitting.

Before stating precisely our results let us introduce some definitions.

\bigskip

Let $M$ be a compact, connected and boundaryless smooth Riemannian manifold of dimension $n \geq 4$. We denote by $\mu$ the Lebesgue measure induced by the Riemannian volume form on $M$. We say that a vector field $X$ is \emph{divergence-free} if its divergence is equal to zero or equivalently if the measure $\mu$ is invariant for the associated flow, $X^t$, $t \in \mathbb{R}$. In this case we say that the flow is \emph{conservative} or \emph{volume-preserving}.

We denote by $\mathfrak{X}_\mu^r(M)$ ($r\geq 1$) the space of $C^r$ divergence-free vector fields of $M$ and endow this set with the usual $C^1$-topology. 

A vector field $X$ is said to be \emph{transitive} if its flow has a dense orbit in $M$. Moreover, $X$ is $C^1$-\emph{robustly transitive} if there exists a $C^1$-neighbourhood of $X$ in $\mathfrak{X}_\mu^1(M)$ such that all its elements are transitive.

Let us now state our first result.

\begin{theorem}\label{T2}
Let $X \in \mathfrak{X}_\mu^1(M)$ be a $C^1$-robustly transitive vector field. Then $X$ has no singularities. 
\end{theorem}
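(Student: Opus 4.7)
I argue by contradiction. Suppose $X \in \mathfrak{X}_\mu^1(M)$ is $C^1$-robustly transitive and admits a singularity $\sigma$. The strategy is to produce, by a $C^1$-small perturbation within $\mathfrak{X}_\mu^1(M)$, a vector field with a local topological obstruction to transitivity in a neighborhood of $\sigma$, contradicting the robustness assumption.

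The main technical ingredient is a divergence-free analogue of Franks' lemma: any trace-zero perturbation of $DX(\sigma)$ can be realized as the derivative at $\sigma$ of some $Y \in \mathfrak{X}_\mu^1(M)$, arbitrarily $C^1$-close to $X$, with the perturbation supported in an arbitrarily small neighborhood of $\sigma$. Such a statement is standard in the conservative setting and is likely established earlier in the paper by a local pasting or straightening argument using the fact that divergence-free fields locally correspond to closed $(n-1)$-forms.

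Exploiting this perturbation freedom, together with the trace-zero constraint imposed by $\mathrm{div}(X)=0$ and the dimension hypothesis $n\geq 4$, I would adjust the spectrum of $A:=DY(\sigma)$ so that it has a pair of purely imaginary eigenvalues $\pm i\omega$, while the remaining $n-2$ eigenvalues are hyperbolic and sum to zero. This yields an invariant splitting $T_\sigma M = E^c \oplus E^h$ with $\dim E^c = 2$, on which the linearized flow acts as a pure rotation, and a complementary hyperbolic block. A centre-manifold / Lyapunov-centre analysis then produces, for the nonlinear field $Y$ near $\sigma$, a two-dimensional locally invariant manifold tangent to $E^c$ that is essentially foliated by closed orbits of arbitrarily small period. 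Combined with the hyperbolic directions, this forces a nontrivial compact invariant structure in any neighborhood of $\sigma$ --- either an invariant codimension-one piece that separates a small ball, or an open set of orbits that remain confined for all positive (or negative) time --- which is incompatible with the transitivity of $Y$. Since $Y$ can be chosen in any prescribed $C^1$-neighborhood of $X$, this contradicts the $C^1$-robust transitivity of $X$.

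The main obstacle is making the local obstruction rigorous in the $C^1$ setting: classical centre-manifold and Lyapunov-centre theorems usually require higher regularity and non-resonance, so one must either verify that they apply or replace them by a purely $C^1$ topological argument. This is precisely where the divergence-free hypothesis plays its essential role: it rules out the easy sink/source obstructions used in the dissipative proofs of Vivier, but the trace-zero constraint also forces the appearance of centre-like eigenvalues after a suitable perturbation, which is what one uses to generate the obstruction. A secondary technical point is ensuring that the perturbation can be localized near $\sigma$ without destroying the divergence-free property elsewhere, which the conservative Franks-type lemma delivers.
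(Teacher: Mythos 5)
Your plan diverges from the paper's argument at the crucial step, and the step where it diverges does not work. The proposed obstruction --- a two-dimensional centre block at the singularity carrying closed orbits --- does not obstruct transitivity. Even in the best case, where you linearize $Y$ near $\sigma$ (which the pasting-lemma technology of the paper does allow) so that the plane $E^c$ is genuinely foliated by circles, that plane is an invariant set of codimension $n-2\geq 2$: it has zero volume, it separates no ball, and it traps no open set of orbits, since every point off $E^c\oplus E^s$ escapes forward along the unstable directions. Transitivity only asserts the existence of a dense orbit and is perfectly compatible with such a lower-dimensional invariant set. Without exact linearization the situation is worse: a pair of purely imaginary eigenvalues for a $C^1$ (or even $C^\infty$) field gives, generically, a weak focus on the centre manifold and no closed orbits near $\sigma$ at all; the Lyapunov centre theorem needs a first integral or reversibility plus non-resonance, none of which you have. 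So the contradiction with transitivity is never actually reached.

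The paper's proof is not local at the singularity. It first perturbs (Lemma 3.3) to make $\sigma$ a \emph{linear hyperbolic} singularity, which the divergence-free condition forces to be of saddle type; Vivier's Proposition 4.1 then shows that a linear hyperbolic saddle is incompatible with \emph{any} dominated splitting of the linear Poincar\'e flow over the whole regular set $M\setminus Sing$ (this is Proposition 2.1). The contradiction comes from Proposition 2.2, which asserts that a robustly transitive conservative field does admit such a dominated splitting; that proposition is where all the real work lives (the conservative Franks-type lemma, the Pugh--Robinson closing lemma, the Bonatti--Gourmelon--Vivier analysis of linear differential systems). Notably, the genuine topological obstruction to transitivity that the paper does exploit appears at periodic \emph{elliptic orbits}, not at the singularity: Lemma 3.4 shows an elliptic closed orbit can be perturbed to acquire an invariant tubular neighbourhood, which is an open invariant set and therefore truly kills transitivity. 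If you want to salvage a local argument at $\sigma$, you would need to manufacture an open (or separating) invariant set near the singularity, and a codimension-two centre plane cannot do that.
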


We denote by $Sing(X)$ the set of \emph{singularities} of $X$ and by $R:=M\setminus Sing(X)$ the set of \emph{regular} points. 
Given $x\in R$ we consider its normal bundle $N_{x}=X(x)^{\perp}\subset T_{x}M$ and define the \emph{linear Poincar\'{e} flow} by $P_{X}^{t}(x):=\Pi_{X^{t}(x)}\circ DX^{t}_{x}$ where $\Pi_{X^{t}(x)}:T_{X^{t}(x)}M\rightarrow N_{X^{t}(x)}$ is the projection along the direction of $X(X^{t}(x))$. Let $\Lambda \subset R$ be an $X^t$-invariant set and $N=N^{1}\oplus  N^{2}\oplus ... \oplus N^{k}$ be a $P_{X}^{t}$-invariant splitting over $\Lambda$ such that all the subbundles have  constant dimension. We say that this splitting is
 an $\ell$-\emph{dominated splitting} for the linear Poincar\'{e} flow if there exists an $\ell \in {\mathbb N}$ such that, for all $0 \leq i < j \leq k$ and for all $x\in \Lambda$  we have:
$$\|P_{X}^{\ell}(x)|_{N^{j}_{x}}\|  . \|(P_{X}^{\ell}(x)|_{N^{i}_{x}})^{-1}\|   \leq \frac{1}{2}.$$

Previous result guaranties that a $C^1$-robustly transitive vector field has no singularities. Moreover, next theorem shows that if these vector fields are of class $C^4$ then they exhibit some type of weak hyperbolicity.

\begin{theorem}\label{T1}
Let $X \in \mathfrak{X}_\mu^4(M)$ be a $C^1$-robustly transitive vector field. Then $X$ has no singularities and the linear Poincar\'e flow of  $X$ admits a dominated splitting over $M$. 
\end{theorem}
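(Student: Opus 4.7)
The absence of singularities is immediate from Theorem \ref{T2}, so the entire content of the theorem is the dominated splitting of the linear Poincar\'e flow $P_X^t$ over $M$. I would follow the divergence-free adaptation of the scheme of Bonatti, Gourmelon and Vivier: suppose for contradiction that for every $\ell \in \mathbb{N}$ the flow $P_X^t$ admits no $\ell$-dominated splitting on $M$, and derive a $C^1$-small perturbation of $X$ inside $\mathfrak{X}_\mu^1(M)$ which is not transitive, contradicting $C^1$-robust transitivity.

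The first step is to replace $X$ by a $C^1$-nearby $Y \in \mathfrak{X}_\mu^1(M)$ which is Kupka--Smale, still transitive, and has dense periodic orbits --- density coming from a conservative (Pugh--Robinson-type) closing lemma applied to generic transitive divergence-free flows. Since $C^\infty$ divergence-free vector fields are $C^1$-dense in $\mathfrak{X}_\mu^1(M)$ by Zuppa's approximation theorem, I may moreover arrange that $Y$ is smooth. A classical argument of Ma\~n\'e, transposed to the linear Poincar\'e flow in \cite{BGV}, shows that an $\ell$-dominated splitting defined on the union of all periodic orbits of $Y$ extends continuously to its closure; therefore, under the standing hypothesis, there must exist a periodic orbit $\gamma$ of $Y$ whose linear Poincar\'e return map fails to admit any $\ell$-dominated splitting, for $\ell$ arbitrarily large.

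The heart of the proof is then to use the lack of domination along $\gamma$ to produce, by a further $C^1$-small perturbation inside $\mathfrak{X}_\mu^1(M)$, a periodic orbit whose Poincar\'e return map on the normal bundle is totally elliptic (all eigenvalues on the unit circle). This rests on a conservative Franks lemma for divergence-free flows: any sufficiently small volume-preserving perturbation of the linear Poincar\'e return map along $\gamma$ should be realizable by a $C^1$-small divergence-free perturbation of $Y$. Granting this tool, the iterated-rotation mechanism of Bonatti--D\'\i az--Pujals, implemented inside the volume-preserving linear group acting on $N_\gamma$, coalesces the eigenvalues of the return map onto the unit circle precisely when domination fails at all indices.

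Finally, I would argue that a totally elliptic closed orbit of a $C^4$ divergence-free flow is incompatible with transitivity, by invoking a Moser/Herman-type KAM result: near such an orbit one finds a positive measure set of invariant tori, and these tori bound a non-empty proper $Y^t$-invariant open subset of $M$, directly contradicting transitivity. The $C^4$ hypothesis enters exactly here, since the KAM persistence theorems for elliptic closed orbits of volume-preserving flows require the flow to be $C^r$ with $r$ sufficiently large relative to the dimension of the Poincar\'e section. I expect the main obstacle of this plan to be the conservative Franks lemma in dimension $n\geq 4$: prescribing a given volume-preserving deformation of the return map along $\gamma$ while keeping the perturbation exactly divergence-free and $C^1$-small is substantially more delicate than the non-conservative construction used in \cite{BGV}, and its interplay with the rotation argument that collapses the eigenvalues onto the unit circle is the technical core of the approach.
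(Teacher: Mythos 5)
Your overall architecture matches the paper's: reduce to the absence of domination along long periodic orbits, use a conservative Franks-type lemma (this is exactly the paper's Main Perturbation Lemma, Lemma~\ref{mpl}) together with the rotation mechanism to produce a periodic orbit whose linear Poincar\'e return map has all eigenvalues of modulus one, and then contradict robust transitivity. The paper organizes this through Proposition~\ref{VF} (uniform $\ell$-domination on all periodic orbits of period $\geq \varrho$ of all nearby fields) and then transfers it to $M$ via the Pugh--Robinson closing lemma, rather than first making periodic orbits dense for a Kupka--Smale perturbation; that difference is cosmetic.

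The genuine gap is in your last step. You propose to kill transitivity by invoking KAM near the totally elliptic closed orbit, and you locate the $C^4$ hypothesis there. This fails for two reasons. First, the vector field carrying the elliptic orbit is the \emph{output} of the Franks-type perturbation, and that output is only $C^1$ (the paper states explicitly that the glued field $Y$ is merely $C^1$); the $C^4$ regularity of $X$ does not survive the perturbation, so no KAM theorem applies to $Y$. Second, even ignoring regularity, the return map produced by the BGV mechanism has all eigenvalues real and equal to $\pm 1$ --- a fully resonant, possibly non-semisimple situation --- whereas KAM persistence of invariant tori requires Diophantine elliptic eigenvalues plus a nondegenerate twist, neither of which can be certified after a $C^1$-small perturbation. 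The paper avoids KAM entirely: in Lemma~\ref{linear2} it first perturbs (if necessary) so that the elliptic return map is diagonalizable, then replaces the flow in a tubular neighbourhood by the suspension of the linearized return map using the Arbieto--Matheus pasting lemma; in an adapted inner product a round disk in $N_p$ is exactly invariant under the linear return map, so the tubular neighbourhood is exactly $Z^t$-invariant, contradicting transitivity in the $C^1$ category. Correspondingly, the true role of the $C^4$ hypothesis is not KAM but the conservative Franks lemma itself: one needs the flowbox linearization and the Poincar\'e flow to be $C^2$ so that the $C^{1+\alpha}$ pasting lemma of Arbieto--Matheus yields an exactly divergence-free extension. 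You correctly identified that lemma as the technical core, but the closing argument must be replaced by the elementary invariant-neighbourhood construction for the proof to go through.
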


We point out that this theorem requires that the $C^1$-robustly transitive vector field $X$ is of class $C^4$ and not just of class $C^1$ as it would be expected. This hypothesis is a  technical assumption  needed to make $C^1$-conservative perturbations of the initial vector field $X$.  Actually, the proofs of  Theorems~\ref{T2} and~\ref{T1} are based on the ones made by Vivier (\cite{V}) and by Bonatti, Gourmelon and Vivier (\cite{BGV}) but, as the perturbations are made in the conservative class, we need to develop some  appropriate $C^1$ perturbation lemmas, namely a kind of conservative Franks' lemma, and for that we need to begin with $C^4$ regularity. We also refer that one of the main tools to get these perturbation lemmas is the Arbieto and Matheus Pasting Lemma (\cite{AM}). 

Let us now state some corollaries of the two theorems above.

As, for $r \geq 2$, $C^r$ divergence-free vector fields are $C^1$-dense in $\mathfrak{X}_\mu^1(M)$ (Zuppa,~\cite{Z}) we obtain the following corollary.

\begin{corollary}
Let $\mathcal{T}$ be the open set of $C^1$-robustly transitive vector fields $X \in \mathfrak{X}_\mu^1(M)$.
There exists a $C^1$-dense subset $\mathcal{U}$ of $\mathcal{T}$ such that if $X \in \mathcal{U}$ then $\text{Sing}(X)=\emptyset$  and the linear Poincar\'e flow of  $X$ admits a dominated splitting over $M$.
\end{corollary}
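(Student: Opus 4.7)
The plan is to take $\mathcal{U}:=\mathcal{T}\cap\mathfrak{X}_\mu^4(M)$ and verify that this set is $C^1$-dense in $\mathcal{T}$ and that Theorem~\ref{T1} can be applied to each of its elements.

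For the density part, I would combine two ingredients. First, Zuppa's theorem (\cite{Z}) gives that for every $r\geq 2$ the space $\mathfrak{X}_\mu^r(M)$ is $C^1$-dense in $\mathfrak{X}_\mu^1(M)$; in particular $\mathfrak{X}_\mu^4(M)$ is $C^1$-dense. Second, $\mathcal{T}$ is $C^1$-open: this is essentially built into the definition of robust transitivity, since any $Y$ lying in the $C^1$-neighbourhood of transitive fields witnessing that $X\in\mathcal{T}$ itself has such a neighbourhood. Given any $X\in\mathcal{T}$ and any $C^1$-neighbourhood $\mathcal{V}\subset\mathcal{T}$ of $X$, Zuppa's density then produces some $Y\in\mathfrak{X}_\mu^4(M)\cap\mathcal{V}\subseteq\mathcal{U}$, which shows that $\mathcal{U}$ is $C^1$-dense in $\mathcal{T}$.

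For the dynamical conclusions, let $X\in\mathcal{U}$ be arbitrary. By construction $X$ is $C^1$-robustly transitive and of class $C^4$, so the hypotheses of Theorem~\ref{T1} are satisfied. Its conclusion yields $\mathrm{Sing}(X)=\emptyset$ and a dominated splitting for the linear Poincar\'e flow of $X$ over $M$, which is precisely what the corollary asserts.

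There is no genuine obstacle in this argument; the entire content is that the $C^4$ regularity assumption in Theorem~\ref{T1} does not restrict $C^1$-generic behaviour inside $\mathcal{T}$, because smoother divergence-free vector fields are already $C^1$-dense and robust transitivity persists under $C^1$-perturbations. The only subtlety to keep in mind is to interpret density in the subspace topology on $\mathcal{T}$, which is ensured by the $C^1$-openness of $\mathcal{T}$ in $\mathfrak{X}_\mu^1(M)$.
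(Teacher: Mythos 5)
Your proposal is correct and is exactly the argument the paper intends: the corollary is stated as an immediate consequence of Zuppa's $C^1$-density of $C^r$ ($r\geq 2$) divergence-free fields together with Theorem~\ref{T1}, applied to $\mathcal{U}=\mathcal{T}\cap\mathfrak{X}_\mu^4(M)$. Your additional remarks on the $C^1$-openness of $\mathcal{T}$ and the subspace topology merely make explicit what the paper leaves implicit.
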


Given a probability measure $\nu$ invariant for the flow $X^t$ we say that $\nu$ is an \emph{ergodic} measure for $X$ if any measurable set that is  invariant by the flow has zero or full measure. Equivalently, $\nu$  is ergodic if, for every observable continuous function $\varphi\colon M \rightarrow \mathbb{R}$ and for $\nu$-a.e. point $x \in M$, one has
$$\int_M \varphi(z) d\nu(z)= \underset{t\rightarrow \infty}{\lim} \frac{1}{t}\int_0^t \varphi(X^s(x))ds.$$
We say that  a $C^r$ vector field $X$ is $C^1$-\emph{stably ergodic} with respect to a probability measure $\nu$, $r \geq 1$, if there exists a $C^1$-neighbourhood of $X$, $\mathcal{U}$, such that  $\nu$ is an ergodic measure for $Y$, for all $Y \in \mathcal{U}$. In this paper we only consider ergodicity and stable ergodicity for the Lebesgue measure.

It is well known that,  for conservative systems, ergodicity implies transitivity. Therefore, as an immediate consequence of Theorem~\ref{T1}, we get the following corollary.

\begin{corollary}
Let $X \in \mathfrak{X}_\mu^4(M)$ be a $C^1$-stably ergodic vector field. Then $\text{Sing}(X)=\emptyset$  and the linear Poincar\'e flow of  $X$ admits a dominated splitting over $M$.
\end{corollary}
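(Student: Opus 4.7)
The plan is to reduce this corollary directly to Theorem~\ref{T1} by showing that $C^1$-stable ergodicity implies $C^1$-robust transitivity inside $\mathfrak{X}_\mu^1(M)$. Once this is established, both conclusions---absence of singularities and the existence of a dominated splitting for the linear Poincar\'e flow over $M$---follow at once, since the regularity hypothesis $X\in\mathfrak{X}_\mu^4(M)$ matches exactly the one required by Theorem~\ref{T1}.

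The main ingredient is the classical observation that, for a flow preserving a Borel probability measure of full support on a compact manifold, ergodicity implies (topological) transitivity. In our setting the measure is the Lebesgue measure $\mu$, which has full support on $M$, so every non-empty open set has positive $\mu$-measure. To deduce transitivity of an ergodic $Y\in\mathfrak{X}_\mu^1(M)$, fix a countable basis $\{U_n\}_{n\in\mathbb{N}}$ of open sets of $M$ and set
$$A_n \;=\; \bigcup_{t\in\mathbb{R}} Y^{-t}(U_n).$$
Each $A_n$ is $Y^t$-invariant, contains $U_n$, and therefore has positive $\mu$-measure; by ergodicity, $\mu(A_n)=1$. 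Consequently $\bigcap_{n\in\mathbb{N}} A_n$ has full measure, in particular is non-empty, and any of its points has an orbit meeting every element of the basis, hence a dense orbit in $M$.

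Applying this argument to every $Y$ in the $C^1$-neighbourhood $\mathcal{U}$ granted by the $C^1$-stable ergodicity of $X$, we conclude that every $Y\in\mathcal{U}$ is transitive. Hence $X$ is $C^1$-robustly transitive, and invoking Theorem~\ref{T1} yields both $\mathrm{Sing}(X)=\emptyset$ and a dominated splitting for the linear Poincar\'e flow of $X$ over $M$. There is essentially no obstacle here: the proof is almost purely definitional once Theorem~\ref{T1} is granted, the only point requiring care being the standard passage from ergodicity to topological transitivity sketched above, which uses in an essential way that $\mu$ has full support.
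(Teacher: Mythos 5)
Your proposal is correct and follows exactly the paper's route: the paper deduces this corollary from Theorem~\ref{T1} via the well-known fact that, for conservative systems, ergodicity implies transitivity, which is precisely the reduction you carry out (you merely spell out the standard full-support argument that the paper leaves implicit).
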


Let $X \in \mathfrak{X}_\mu^1(M)$ be a  vector field without singularities and let $E^1 \oplus E^2 ... \oplus E^k=N$ be a dominated splitting for the linear Poincar\'e flow associated to $X$. We consider the integrated Jacobian of  $X$ restricted to each $E^i$ defined by $$\Sigma^i(X)=\int_M \log |\det P_X^1(x)|_{E^i} |d\mu(x),\,\,i \in \{1,2,...,k\}.$$
In~\cite{BR} we prove that a $C^1$-stably ergodic vector field $X$, such that $M \setminus Sing(X)$ is partially hyperbolic and all the singularities are \emph{linear hyperbolic}, can be $C^1$-approximated by a $C^2$-divergence-free vector field $Y$ such that $\Sigma^c(Y)\not=0$, where $\Sigma^c(Y)$ denotes the integrated Jacobian of  $Y$ restricted to the central subbundle $E^c$. Actually, Theorem~\ref{T2} implies that a $C^1$-stably ergodic vector field does not have singularities. We also remark that the proof given on \cite{BR} only requires the existence of a dominated splitting. This fact was already observed in \cite{BB} in the diffeomorphism context. Hence, Theorem 1 of ~\cite{BR} can be reformulated as follows.

\begin{corollary}\label{nbr}
Let $X \in \mathfrak{X}^{1}_{\mu}(M)$  be a stably ergodic flow and $E^1 \oplus E^2 ... \oplus E^k=N$ be a dominated splitting for the linear Poincar\'e flow of $X$ over $M$. Then $X$ may be approximated, in the $C^{1}$-topology, by $Y \in \mathfrak{X}^{\infty}_{\mu}(M)$ for which $\Sigma^i(Y) \not=0$, for all $i \in \{1,2,...,k\}$.
\end{corollary}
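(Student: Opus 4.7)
The corollary is essentially a reformulation of Theorem 1 of \cite{BR} using Theorem \ref{T2} and an observation attributed to \cite{BB}. My plan has two main ingredients, followed by a standard combination argument.

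First, I would argue that $\text{Sing}(X) = \emptyset$. Since the Lebesgue measure $\mu$ is positive on every open set, ergodicity of a flow with respect to $\mu$ forces $\mu$-almost every orbit to be dense in $M$, so in particular the flow is transitive; the same reasoning applies to every vector field in the stable ergodicity neighbourhood of $X$. Hence a stably ergodic $X \in \mathfrak{X}^1_\mu(M)$ is automatically $C^1$-robustly transitive, and Theorem \ref{T2} yields $\text{Sing}(X) = \emptyset$. In particular the ``linear hyperbolic singularities'' clause in the hypothesis of Theorem 1 of \cite{BR} is vacuous here.

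Second, I would invoke the observation recorded in the paragraph preceding the statement: the proof of Theorem 1 of \cite{BR} uses partial hyperbolicity only through the existence of a $P_X^t$-invariant dominated splitting along the orbit segments on which the perturbation is performed. The local character of the conservative perturbations built there (via the Arbieto--Matheus pasting lemma \cite{AM} together with a conservative Franks-type lemma) makes them insensitive to the behaviour of the cocycle outside a small flow box, so the argument carries through subbundle-by-subbundle with partial hyperbolicity replaced by the dominated splitting $E^1\oplus\cdots\oplus E^k$ given by hypothesis. This is precisely the reduction made by Baraviera--Bonatti \cite{BB} in the diffeomorphism setting; adapting it to the conservative flow context requires only reading \cite{BR} with this localisation in mind.

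Combining these two ingredients, for each $i\in\{1,\dots,k\}$ the perturbation scheme of \cite{BR} produces a $C^\infty$ divergence-free vector field $Y_i$, $C^1$-close to $X$, with $\Sigma^i(Y_i)\neq 0$. Since dominated splittings are $C^1$-robust and their subbundles depend continuously on the vector field in the $C^1$-topology, the map $Y\mapsto \Sigma^i(Y)$ is continuous at such vector fields and the condition $\Sigma^i(Y)\neq 0$ is $C^1$-open. A standard $k$-fold successive-approximation argument then merges the $Y_i$ into a single $Y\in \mathfrak{X}^\infty_\mu(M)$, arbitrarily $C^1$-close to $X$, with $\Sigma^i(Y)\neq 0$ for every $i$; Zuppa's density \cite{Z} is used only if the final refinement needs to be promoted from $C^2$ to $C^\infty$. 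The main obstacle is the second step: one has to verify by direct inspection of \cite{BR} that no part of the perturbation construction uses, even implicitly, uniform contraction or expansion of the ``outer'' subbundles. Once that verification is in place, the statement follows at once.
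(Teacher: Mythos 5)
Your argument matches the paper's own justification of this corollary: it is obtained by citing Theorem 1 of \cite{BR}, discharging the singularity hypothesis via Theorem~\ref{T2} (since ergodicity with respect to $\mu$ implies transitivity, stable ergodicity implies robust transitivity), and invoking the observation from \cite{BB} that the proof of \cite{BR} only uses the existence of a dominated splitting rather than partial hyperbolicity. Your additional openness-plus-successive-approximation step to get $\Sigma^i(Y)\neq 0$ for all $i$ simultaneously, and the upgrade to $C^\infty$ via \cite{Z}, correctly fill in details the paper leaves implicit.
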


Note that if $X \in \mathfrak{X}^{4}_{\mu}(M)$ then the previous result applies directly to the dominated splitting given by Theorem~\ref{T1}.

We recall that a conservative vector field $X$ is said to be \emph{nonuniformly hyperbolic} if all  the Lyapunov exponents are a.e. different from zero. In particular if $M$ is four dimensional,  $X \in \mathfrak{X}_\mu^4(M)$ is $C^1$-stably ergodic and admits a  dominated splitting  with three nontrivial subbundles then the previous corollary assures that $X$ can be $C^1$-approximated by a nonuniformly hyperbolic vector field.

\bigskip

If $M$ is a four dimensional manifold and
$X \in \mathfrak{X}_\mu^4(M)$ is stably ergodic then $M$ admits a dominated splitting, $E \oplus F$, for the linear Poincar\'e flow associated to $X$. Since the vector field is divergence-free and has no singularities it is straightforward to see that the one-dimensional invariant subbundle is hyperbolic and the other subbundle is hyperbolic in volume, that is the splitting is a partially hyperbolic one. Therefore, using again Zuppa's Theorem (\cite{Z}), we obtain the following result.

\begin{corollary}
A $C^1$-stably ergodic vector field $X \in \mathfrak{X}_\mu^1(M^4)$ can be $C^1$-approximated by a partially hyperbolic vector field.
\end{corollary}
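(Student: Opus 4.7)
The plan is to combine Zuppa's $C^1$-density theorem with Theorem~\ref{T1} and then to upgrade the abstract dominated splitting on the three-dimensional normal bundle to a partially hyperbolic one. Fix a $C^1$-neighbourhood $\mathcal{U}\subset\mathfrak{X}_\mu^1(M)$ of $X$ on which ergodicity is stable. By Zuppa's theorem~\cite{Z}, $\mathfrak{X}_\mu^4(M)$ is $C^1$-dense in $\mathfrak{X}_\mu^1(M)$, so I can choose $Y\in\mathcal{U}\cap\mathfrak{X}_\mu^4(M)$ arbitrarily $C^1$-close to $X$. Such a $Y$ is itself $C^1$-stably ergodic and, since ergodicity implies transitivity in the conservative setting, also $C^1$-robustly transitive. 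Theorem~\ref{T1} therefore yields $\text{Sing}(Y)=\emptyset$ together with a dominated splitting $N=N^1\oplus\cdots\oplus N^k$ for the linear Poincar\'e flow $P_Y^t$ over $M$.

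Since $\dim M=4$ and $Y$ is nonsingular, the normal bundle has rank $3$, so in any dominated splitting at least one of the two extremal pieces is a line bundle; grouping the remaining subbundles together I may write $N=E\oplus F$ with $\dim E=1$, $\dim F=2$, and $F$ dominated by $E$ (the opposite ordering is symmetric). The key input is the identity
\[
|\det P_Y^t(x)|=\frac{\|Y(x)\|}{\|Y(Y^t(x))\|},
\]
which follows from $|\det DY^t|\equiv 1$ together with $DY^t(Y(x))=Y(Y^t(x))$. Because $Y$ has no singularities on the compact manifold $M$, $\|Y\|$ is uniformly pinched between two positive constants, so $|\det P_Y^t(x)|$ is bounded above and below independently of $x$ and $t$. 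Iterating the $\ell$-domination condition gives $\|P_Y^{n\ell}|_F\|\le 2^{-n}\|P_Y^{n\ell}|_E\|$; combined with the uniform lower bound on the angle between $E$ and $F$, this yields
\[
|\det P_Y^{n\ell}(x)|\;\asymp\;\|P_Y^{n\ell}(x)|_E\|\cdot|\det P_Y^{n\ell}(x)|_F|\;\le\;4^{-n}\,\|P_Y^{n\ell}(x)|_E\|^3.
\]
Confronting this with the uniform lower bound on $|\det P_Y^{n\ell}|$ forces $\|P_Y^{n\ell}(x)|_E\|\gtrsim 4^{n/3}$, so $E$ is uniformly expanding, while rearranging the same chain gives $|\det P_Y^{n\ell}(x)|_F|\lesssim 4^{-n/3}$, so $F$ is uniformly contracting in volume. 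Hence $Y$ is a partially hyperbolic divergence-free vector field $C^1$-close to $X$.

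The step I expect to be the main obstacle is the volume bookkeeping above, which converts the abstract domination supplied by Theorem~\ref{T1} into honest uniform hyperbolicity of the one-dimensional extremal subbundle and uniform volume hyperbolicity of its complement. This is precisely where the four-dimensional hypothesis enters in an essential way: it forces some extremal piece of the dominated splitting to be a line bundle, so that $\|P_Y^{n\ell}|_E\|=|\det P_Y^{n\ell}|_E|$ and the cube-root comparison with $|\det P_Y^{n\ell}|$ closes up.
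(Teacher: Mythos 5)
Your proposal is correct and follows essentially the same route as the paper: approximate by a $C^4$ divergence-free field via Zuppa, apply Theorem~\ref{T1} to get a nonsingular field with a dominated splitting of the rank-three normal bundle, and use $|\det P_Y^t(x)|=\|Y(x)\|/\|Y(Y^t(x))\|$ together with domination to force hyperbolicity of the one-dimensional extremal bundle and volume hyperbolicity of its complement. The only difference is that you spell out the volume bookkeeping that the paper dismisses as ``straightforward to see,'' and your accounting (including the uniform angle bound and the time-reversed case) is sound.
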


This paper is organized as follows. In section~\ref{tour} we state three results (Proposition~\ref{V1},  Proposition~\ref{V2} and Proposition~\ref{VF}) and deduce the theorems from the first two. Proposition~\ref{V1} is an easy adaptation of Proposition 4.1 of Vivier and we show that Proposition~\ref{VF} implies Proposition~\ref{V2}. In section ~\ref{pl} we obtain the perturbation lemmas needed to prove Proposition~\ref{V1} and Proposition~\ref{VF} and, finally,  in section~\ref{MG} we prove Proposition~\ref{VF}.

\end{section}

\begin{section}{A \emph{tour} on the proofs of the theorems}\label{tour} 
 
In this section we prove Theorems~\ref{T2} and~\ref{T1} following the strategy used by Vivier in ~\cite{V} and  by Bonatti, Gourmelon and Vivier in ~\cite{BGV} and adapting some of their results to the conservative setting.
For that we begin by stating, in the divergence-free vector fields scenario, two main results of the references above.

\begin{proposition}\label{V1}
Let $X \in \mathfrak{X}_\mu^1(M)$ be a robustly transitive vector field such that $Sing(X)\not= \emptyset$. Then there exists an arbitrarily $C^1$-close vector field $Y \in \mathfrak{X}_\mu^4(M)$ such that the linear Poincar\'e flow of $Y$ does not admit any dominated splitting over $M\setminus Sing(Y)$.
\end{proposition}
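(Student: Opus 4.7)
The plan is to exploit the freedom at the singularity to make a $C^1$-small divergence-free perturbation whose linearization at $\sigma$ has a spectral structure incompatible with dominated splitting. First, by Zuppa's theorem I may assume $X\in\mathfrak{X}_\mu^4(M)$ from the outset, since a $C^4$ approximation of $X$ still has a singularity close to $\sigma$. The main tool is the conservative analogue of Franks' lemma at a singularity, developed in Section~\ref{pl} via the Arbieto--Matheus Pasting Lemma (this is the reason for the $C^4$ hypothesis): it allows one to realise any traceless linear map $A$ sufficiently close to $DX(\sigma)$ as $DY(\sigma)$ for some $Y\in\mathfrak{X}_\mu^4(M)$ that is $C^1$-close to $X$ and agrees with $X$ outside an arbitrarily small neighbourhood of $\sigma$.

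With this perturbation tool in hand, I would choose $A$ traceless and $C^0$-close to $DX(\sigma)$ but having a complex-conjugate eigenvalue pair $a\pm ib$, $b\neq 0$, with an irrational rotation rate. Then on the corresponding real two-dimensional spectral plane $V\subset T_\sigma M$, no one-dimensional subspace is invariant under $e^{\ell A}$ for any $\ell$ with $b\ell\notin\pi\mathbb{Z}$, and in particular $V$ admits no non-trivial $e^{\ell A}$-invariant splitting. This is an open condition and persists under small perturbations of $V$.

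Let $Y$ be the resulting perturbation, with $DY(\sigma)=A$. Since $Y$ lies in the $C^1$-open robust transitivity neighbourhood of $X$, the flow $Y^t$ is transitive and its dense orbit accumulates $\sigma$ along a sequence of regular points $p_n\to\sigma$. Assuming, towards a contradiction, that the linear Poincar\'e flow of $Y$ admits an $\ell$-dominated splitting $N=N^1\oplus\cdots\oplus N^k$ on $M\setminus Sing(Y)$, I would pick $\ell$ with $b\ell\notin\pi\mathbb{Z}$ and extract a subsequence along which $Y(p_n)/\|Y(p_n)\|\to v\in T_\sigma M$ and each $N^i(p_n)$ converges in the Grassmannian to a subspace $N^i_\infty\subset v^\perp$. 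Because $P_Y^\ell$ extends continuously along such a subsequence to the induced action of $e^{\ell A}$ on $v^\perp$, the $\ell$-domination inequality passes to the limit, forcing $N^1_\infty\oplus\cdots\oplus N^k_\infty$ to be an $e^{\ell A}$-invariant dominated splitting of $v^\perp$. Choosing the direction of approach so that $v^\perp\cap V$ is a non-trivial subspace of $V$ then contradicts the spectral design of the previous paragraph.

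The main obstacle is the conservative Franks' lemma at a singularity: realising a prescribed traceless $DY(\sigma)=A$ while keeping $\mathrm{div}\,Y=0$ and $Y$ $C^1$-close to $X$ is delicate, and this is precisely where the $C^4$ regularity enters via the Pasting Lemma. A secondary technical point is controlling the limit of the linear Poincar\'e flow as $p_n\to\sigma$: the projection $\Pi$ defining $P_Y$ degenerates in the direction of $Y$, but along a subsequence with $Y(p_n)/\|Y(p_n)\|\to v$ the quotient action converges to a well-defined continuous extension of $P_Y^\ell$ on $v^\perp$, so the domination bound truly survives the limit.
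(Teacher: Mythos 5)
Your overall strategy (perturb the vector field near the singularity $\sigma$ to create a spectral obstruction in $DY(\sigma)$, then contradict domination by letting regular points accumulate on $\sigma$) is in the right spirit, but the contradiction step has a genuine gap. If $p_n\to\sigma$ with $Y(p_n)/\|Y(p_n)\|\to v$, then $P_Y^\ell(p_n)$ converges (after noting $Y(Y^\ell(p_n))=DY^\ell_{p_n}Y(p_n)$) to $\Pi_{(e^{\ell A}v)^\perp}\circ e^{\ell A}|_{v^\perp}$, which is a linear map from $v^\perp$ to the \emph{different} hyperplane $(e^{\ell A}v)^\perp$; it is not an endomorphism of $v^\perp$ and the limit splitting is not ``$e^{\ell A}$-invariant'' unless $v$ happens to be an eigenvector of $A$. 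A single linear map between two hyperplanes carrying a one-step norm inequality with respect to two splittings contradicts nothing: to exploit the complex rotation on $V$ you would need the limit splitting to be invariant under arbitrarily many compositions, i.e.\ you would need the orbit segment of $p_n$ of length $k\ell$ to stay near $\sigma$ for every $k$. Such points are necessarily shadowing $W^s(\sigma)$, the limit directions $v$ then lie in the stable cone, and the rotation plane $V$ (which sits inside $E^s$ or $E^u$ when $a\neq 0$, or disappears into a non-hyperbolic singularity when $a=0$) need not be detected by $v^\perp$ at all. Relatedly, you cannot ``choose the direction of approach'' $v$; the set of accumulation directions is dictated by the dynamics, not by you.

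The paper takes a different and cleaner route that you should compare with. Lemma~\ref{linear} (which needs only the $C^1$ pasting lemma of Arbieto--Matheus, not the $C^4$ machinery --- the $C^4$ hypothesis is required for the Franks-type perturbation along regular orbit segments in Lemma~\ref{mpl}, used elsewhere) produces $Y\in\mathfrak{X}_\mu^{\infty}(M)$, $C^1$-close to $X$, for which $\sigma$ is a \emph{linear hyperbolic} singularity; since $\operatorname{div}Y=0$ forces $\operatorname{tr}DY(\sigma)=0$, this singularity is automatically of saddle type. Proposition 4.1 of Vivier then states outright that a linear hyperbolic saddle singularity is incompatible with a dominated splitting of the linear Poincar\'e flow over $M\setminus Sing(Y)$; the mechanism there is the incompatibility between the splittings forced along $W^s(\sigma)$ and $W^u(\sigma)$ (where the flow direction is uniformly contracted, resp.\ expanded), precisely the kind of multi-iterate information at the singularity that your one-step limit cannot produce. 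If you want a self-contained argument rather than a citation, it is that saddle mechanism, not a complex eigenvalue, that you need to implement.
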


\begin{proposition}\label{V2}
Let $X \in \mathfrak{X}_\mu^4(M)$ be a robustly transitive vector field. Then there exists a dominated splitting for the linear Poincar\'e flow of $X$ over $M\setminus Sing(X)$.

\end{proposition}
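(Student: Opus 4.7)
My approach is to argue by contradiction and invoke Proposition~\ref{VF} as a black box. Fix a $C^1$-robustly transitive $X \in \mathfrak{X}_\mu^4(M)$ and a $C^1$-neighbourhood $\mathcal{U} \subset \mathfrak{X}_\mu^1(M)$ in which every vector field is transitive. Assume for contradiction that the linear Poincar\'e flow of $X$ admits no dominated splitting over $R = M\setminus \mathrm{Sing}(X)$. The plan is to use Proposition~\ref{VF} to produce, arbitrarily $C^1$-close to $X$, a non-transitive divergence-free vector field, contradicting $\mathcal{U}$.

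Following the Bonatti--Gourmelon--Vivier strategy \cite{BGV} adapted to the divergence-free setting, I expect Proposition~\ref{VF} to state that the absence of a dominated splitting for $P_X^t$ allows one, by an arbitrarily small $C^1$-perturbation inside $\mathfrak{X}_\mu^4(M)$, to produce a vector field $Y$ carrying a closed orbit $\gamma$ whose linearised Poincar\'e return map is totally neutral (every eigenvalue of modulus one, or some iterate equal to the identity). Then, first I would apply Proposition~\ref{VF} to obtain such a $Y\in \mathcal{U}\cap\mathfrak{X}_\mu^4(M)$; next, using the conservative Franks-type lemma developed in Section~\ref{pl}, I would adjust $Y$ along $\gamma$ to a vector field $Z\in\mathcal{U}$ whose Poincar\'e return map at $\gamma$ is exactly the identity; finally, by a further $C^1$-small divergence-free plug supported in a small tubular neighbourhood of $\gamma$, I would construct a vector field $W\in\mathcal{U}$ admitting a proper open flow-invariant set. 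This contradicts the transitivity of $W$ and closes the argument.

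The main obstacle is the last step. In the dissipative framework of \cite{BGV}, the degenerate linear behaviour along $\gamma$ is converted into a topological obstruction by creating a sink or a source, which automatically yields an invariant trapping region; this mechanism is unavailable in the volume-preserving category. Instead, one has to cook up a genuine invariant open region compatible with the divergence-free constraint, using a carefully designed conservative plug that bends nearby orbits back into the tubular neighbourhood of $\gamma$ without breaking incompressibility. It is precisely here that the $C^4$ hypothesis is forced upon us: the conservative perturbation lemmas of Section~\ref{pl} rely on the Arbieto--Matheus pasting lemma \cite{AM}, which demands extra regularity in order to smoothly interpolate between a locally modified conservative model (identity Poincar\'e map, or prescribed rotational behaviour) and the original flow while staying in $\mathfrak{X}_\mu^1(M)$.
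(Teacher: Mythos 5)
Your proposal rests on a misreading of what Proposition~\ref{VF} actually says, and as a result it skips the entire content of the deduction. Proposition~\ref{VF} is not the statement ``no dominated splitting over $M\setminus Sing(X)$ implies one can perturb to create a totally neutral closed orbit''; it is the positive, uniform statement that there exist $\ell$ and $\varrho$ such that \emph{every} periodic orbit of period at least $\varrho$ of \emph{every} vector field in a fixed $C^1$-neighbourhood of $X$ carries an $\ell$-dominated splitting. The contradiction machinery you sketch (neutral return map $\to$ identity return map $\to$ invariant tubular neighbourhood $\to$ non-transitivity) is precisely how the paper proves Proposition~\ref{VF} itself in Section~\ref{MG} (via the conservative version of Theorem 2.2 of \cite{BGV}, the Main Perturbation Lemma, and Lemma~\ref{linear2}), so your argument is essentially a re-proof of the black box you claim to be invoking, not a derivation of Proposition~\ref{V2} from it.

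The genuine gap is the bridge from periodic orbits to the set $M\setminus Sing(X)$, which is where all the work of this deduction lies. Proposition~\ref{VF} only speaks about periodic orbits, and a priori $X$ need not have any. The paper takes a point $x$ with dense $X^t$-orbit, applies the Pugh--Robinson volume-preserving closing lemma to produce conservative vector fields $X_n\to X$ in the $C^1$-topology having periodic orbits $\Gamma_n$ with $\Gamma_n(0)\to x$ and periods $\pi_n\to+\infty$, applies Proposition~\ref{VF} to each $X_n$ to get an $\ell$-dominated splitting over $\Gamma_n$ with the \emph{same} $\ell$ (after passing to a subsequence so the bundle dimensions stabilise), and then uses the standard continuity/closure property of $\ell$-dominated splittings to pass to the limit; since $\bigcap_N\overline{\bigcup_{n\geq N}\Gamma_n}=M$ by density of the orbit of $x$, the splitting extends to all of $M\setminus Sing(X)$. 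Your proposal contains neither the closing lemma nor the limit argument, and without them there is no way to transfer information from periodic orbits of perturbations to the (non-periodic) dynamics of $X$ itself. Your concern about building an invariant open region in the volume-preserving category is legitimate but misplaced here: it is resolved inside the proof of Proposition~\ref{VF} by Lemma~\ref{linear2}, not in the step you were asked to supply.
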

Let us now explain how we derive Theorems~\ref{T2} and~\ref{T1} from the previous propositions. 

Fix a robustly transitive vector field $X \in \mathfrak{X}_\mu^1(M)$; let $\mathcal{U}$ be a $C^1$ open neigbourhood of $X$ such that all $Y \in \mathcal{U}$ are transitive, hence robustly transitive. 

Let us assume that $Sing(X)\not=\emptyset$. Applying Lemma~\ref{linear}, we obtain $Y \in \mathfrak{X}_\mu^{\infty}(M) \cap \mathcal{U}$ such that $Sing(Y)\not= \emptyset$ and $Y$ has at least �ne hyperbolic singularity. Therefore, by Proposition~\ref{V1}, we obtain a  vector field $Z \in \mathfrak{X}_\mu^4(M) \cap \mathcal{U}$ such that $Sing(Z)\not= \emptyset$ and the linear Poincar\'e flow of $Z$ does not admit any dominated splitting  over $M\setminus Sing(Z)$, which is in contradiction with Proposition~\ref{V2} applied to $Z$. Therefore $Sing(X)=\emptyset$, which proves Theorem~\ref{T2}.

Now let $X \in \mathfrak{X}_\mu^4(M)$ be a $C^1$-robustly transitive vector field; the previous argument shows that $Sing(X)=\emptyset$ and then Proposition~\ref{V2} guarantees that $M$ admits a dominated splitting for the linear Poincar\'e flow of $X$, thus proving Theorem~\ref{T1}.

\bigskip

Proposition~\ref{V1}, up to a minor detail, is a consequence of Proposition 4.1 of~\cite{V}. To see this let us first recall that a  singularity $p$ of a given vector field $X$ is said to be a \emph{linear hyperbolic singularity} if it is a hyperbolic singularity and there exist smooth local coordinates that conjugate  $X$ and $DX_p$ in a neighbourhood of $p$. In Lemma~\ref{linear} we prove that any $X \in \mathfrak{X}_\mu^1(M)$ having a singularity $p$ can be $C^1$-approximated by a vector field $Y \in \mathfrak{X}_\mu^{\infty}(M)$ such that $p$ is a linear hyperbolic singularity of $Y$. Since $Y$ is a divergence-free vector field it follows that $p$ is of the saddle-type. Now Proposition~\ref{V1} is a direct consequence of the following result.

\begin{proposition}(Proposition 4.1 of~\cite{V})
If $Y \in \mathfrak{X}^1(M)$ admits a linear hyperbolic singularity of saddle-type, then the linear Poincar\'e flow of $Y$ does not admit any dominated splitting  over $M\setminus Sing(Y)$.
\end{proposition}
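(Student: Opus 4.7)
The plan is to argue by contradiction. Suppose the linear Poincar\'e flow of $Y$ admits an $\ell$-dominated splitting $N=N^{1}\oplus\cdots\oplus N^{k}$ over $M\setminus Sing(Y)$. Working in linearizing coordinates on a neighborhood $U$ of $p$, one has $Y(x)=Ax$ with $A=DY_{p}$ a hyperbolic saddle. For clarity of exposition I assume $A$ is diagonalizable over $\mathbb{R}$ with eigenvalues $\lambda_{1}<\cdots<\lambda_{n}$ satisfying $\lambda_{1}<0<\lambda_{n}$ and orthonormal eigenvectors $v_{1},\ldots,v_{n}$; the general case reduces to this by passing to the real Jordan form and choosing an adapted Riemannian metric on $U$.

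Along each eigen-axis orbit $\gamma_{i}=\{sv_{i}:s>0\}$ one has $Y(x)\parallel v_{i}$, so $N_{x}=v_{i}^{\perp}$ and a direct computation gives $P^{t}_{Y}|_{\gamma_{i}}=e^{tA}|_{v_{i}^{\perp}}$, acting diagonally on $\{v_{j}:j\neq i\}$ with rates $e^{t\lambda_{j}}$. Invariance of the splitting and $\ell$-domination then force $N^{r}|_{\gamma_{i}}$ to be the block of $d_{r}:=\dim N^{r}$ eigendirections of $A|_{v_{i}^{\perp}}$ ordered by decreasing eigenvalue, because any invariant dominated splitting of a hyperbolic linear map with distinct eigenvalues must be a consecutive grouping of eigenspaces. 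In particular, along $\gamma_{1}$ the subbundle $N^{1}$ contains $v_{n}$, while along $\gamma_{n}$ one has $N^{1}\subset v_{n}^{\perp}$ and so $N^{1}$ misses $v_{n}$ entirely.

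Next I would exhibit an orbit interpolating between these two regimes. Set $z_{0}=\epsilon(v_{1}+\delta v_{n})$ with $0<\delta\ll\epsilon\ll 1$; its trajectory $X^{t}(z_{0})=\epsilon e^{\lambda_{1}t}v_{1}+\epsilon\delta e^{\lambda_{n}t}v_{n}$ is backward asymptotic to $\gamma_{1}$ and forward asymptotic to $\gamma_{n}$. By continuity of the dominated splitting at $\epsilon v_{1}$, for $\delta$ small enough $N^{1}(z_{0})$ contains a unit vector $u$ arbitrarily close to $v_{n}$, and $N^{2}(z_{0})$ contains a unit vector $w$ close to $v_{n-1}$. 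A direct computation in $U$ shows that $Y(X^{t}(z_{0}))$ becomes asymptotically parallel to $v_{n}$ as $t\to+\infty$, so the orthogonal projection $\Pi_{X^{t}(z_{0})}$ onto $N_{X^{t}(z_{0})}$ kills the $v_{n}$-component of $e^{tA}v_{n}=e^{\lambda_{n}t}v_{n}$ up to an error of order $e^{(\lambda_{1}-\lambda_{n})t}$; consequently $\|P^{t}_{Y}(z_{0})u\|\sim e^{\lambda_{1}t}$, while the projection acts essentially as the identity on $v_{n-1}$ and hence $\|P^{t}_{Y}(z_{0})w\|\sim e^{\lambda_{n-1}t}$.

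Finally, iterating the $\ell$-domination $k$ times yields $\|P^{k\ell}_{Y}(z_{0})w\|/\|P^{k\ell}_{Y}(z_{0})u\|\leq 2^{-k}$, and substituting the asymptotic growth rates gives $e^{k\ell(\lambda_{n-1}-\lambda_{1})}\leq 2^{-k}$, which is impossible for large $k$ since $\lambda_{n-1}-\lambda_{1}>0$. The main obstacle is the asymptotic estimate in the previous paragraph: one must track carefully the rate at which the projection $\Pi_{X^{t}(z_{0})}$ contracts $v_{n}$ as the flow direction rotates to become parallel to $v_{n}$, and absorb the continuity errors coming from approximating $u$ by $v_{n}$ and $w$ by $v_{n-1}$ rather than working in the exact eigenspace decomposition on $\gamma_{1}$; a secondary issue is extending the argument to non-diagonalizable or complex $A$ via real Jordan blocks, where the analogous projection estimate requires a slightly more delicate computation but produces the same exponential rates.
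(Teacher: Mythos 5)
The paper offers no proof of this statement: it is quoted as Proposition 4.1 of Vivier~\cite{V} and used as a black box. So your attempt must be judged on its own. Your geometric setup (linearizing chart, the axis orbits $\gamma_1,\gamma_n$, the connecting orbits $z_0=\epsilon(v_1+\delta v_n)$, continuity of the splitting) is the right one, but the closing step has a genuine gap. The orbit of $z_0$ stays in the linearizing chart only up to a time $T(\delta)\approx\lambda_n^{-1}\log(1/\delta)$, so you cannot let $k\to\infty$ along a fixed orbit; you must send $\delta\to 0$ and track how your asymptotic constants depend on $\delta$. Doing so shows that your comparison fails for some saddles: for $u\approx v_n$ the norm $\|P_Y^t(z_0)u\|$ grows like $e^{\lambda_n t}$ until the turning time $t_*\approx\log(1/\delta)/(\lambda_n-\lambda_1)$ at which the flow direction swings from $v_1$ to $v_n$, and only \emph{afterwards} decays like $e^{\lambda_1 t}$; at the exit time one gets $\|P_Y^{T}u\|\asymp e^{(\lambda_1+\lambda_n)T}$, not $e^{\lambda_1 T}$. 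Hence $\|P_Y^Tw\|/\|P_Y^Tu\|\asymp e^{(\lambda_{n-1}-\lambda_1-\lambda_n)T}$, which is perfectly compatible with $\ell$-domination whenever $\lambda_n-\lambda_{n-1}-|\lambda_1|\geq (\log 2)/\ell$ --- e.g.\ for the divergence-free spectrum $(-100,-1.9,0.9,101)$ and any $\ell\geq 7$. Since the proposition must exclude $\ell$-domination for \emph{every} $\ell$, the norm comparison of $u\approx v_n$ against $w\approx v_{n-1}$ cannot close the argument. (A smaller inaccuracy: along $\gamma_1$ domination pins down only the dominated bundle $N^2\oplus\cdots\oplus N^k$ exactly as an eigenblock; $N^1$ is merely transverse to it and converges to the top block only as the point tends to $p$.)

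The repair is projective rather than metric, and it removes precisely the estimates you flag as the main obstacle. The plane $\Pi=\mathrm{span}(v_1,v_n)$ is flow-invariant, so the line field $L_t=N_{Y^t(z_0)}\cap\Pi$ is invariant under the linear Poincar\'e flow and is computable from the flow direction alone: $L_0\to\mathbb{R}v_n$ and $L_{T(\delta)}\to\mathbb{R}v_1$ as $\delta\to 0$. Write a unit vector $u\in L_0$ as $u=e+f$ with $e\in N^1$ and $f$ in the sum of the remaining bundles; since $v_n$ is (asymptotically) orthogonal to that sum at $\epsilon v_1$ and the angle between the bundles of a dominated splitting is uniformly bounded below (the relevant norms $\|P_Y^{\pm\ell}\|$ are bounded on $M\setminus Sing(Y)$ by compactness of $M$), the slope $\|f\|/\|e\|$ is bounded independently of $\delta$. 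Domination gives $\|P_Y^tf\|/\|P_Y^te\|\leq 2^{-\lfloor t/\ell\rfloor}\|f\|/\|e\|$, so $L_{T}$ makes an angle $o(1)$ with $N^1(Y^{T}(z_0))$. But $N^1(Y^{T}(z_0))\to\mathrm{span}(v_{n-d},\ldots,v_{n-1})\perp v_1$ while $L_{T}\to\mathbb{R}v_1$, contradicting the uniform angle bound. No asymptotics for $\|\Pi_{Y^t(z_0)}e^{tA}v_n\|$ are needed, and the Jordan-block case goes through because both the invariance of $L_t$ and the limiting positions of the splitting along the axes are unaffected.
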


\bigskip

Proposition~\ref{V2} is a consequence of the following result, which is an adaptation to the conservative setting of Corollary 2.22 of~\cite{BGV}, whose proof  is postponed to section~\ref{MG}.

\begin{proposition}\label{VF}
Let $X \in \mathfrak{X}_\mu^4(M)$ be a $C^1$-robustly transitive vector field and let $\mathcal{U}$ be a small $C^1$-neighbourhood of $X$. There exist $\ell, \varrho \in \mathbb{R}_0^+$ such that for any $Y \in \mathcal{U}$ and any periodic orbit $x$ of $Y^t$, of period $\pi(x) \geq \varrho$, the linear Poincar\'e flow of $Y$ admits an $\ell$-dominated splitting  over the $Y^t$-orbit of $x$.
\end{proposition}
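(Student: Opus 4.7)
The plan is to argue by contradiction. Suppose that no pair $(\ell, \varrho)$ works: then for each $m \in \mathbb{N}$ there exist $Y_m \in \mathcal{U}$ and a periodic point $x_m$ of $Y_m^t$ whose period $\pi_m$ is at least $m$ and such that the linear Poincar\'e flow $P_{Y_m}^t$ along the orbit $\mathcal{O}(x_m)$ admits no $m$-dominated splitting of any index.

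The first ingredient is that along any closed orbit $x$ of period $T$ the Poincar\'e return map $P_Y^T(x) \colon N_x \to N_x$ is volume-preserving; indeed a short computation using $\det DY^T = 1$ gives $\det P_Y^T(x) = \|Y(x)\|/\|Y(Y^T(x))\|$, which equals $1$ on a periodic orbit. Thus, for each $m$, the linear Poincar\'e cocycle along $\mathcal{O}(x_m)$ is a finite sequence of linear maps in $\mathrm{GL}(N)$ whose total product lies in $\mathrm{SL}(N)$. Next, I would use the conservative Franks-type lemma developed in Section~\ref{pl} (itself relying on the Arbieto--Matheus pasting lemma, which is what forces the $C^4$ hypothesis on $X$) to realize prescribed small perturbations of the linear cocycle along $\mathcal{O}(x_m)$ as $C^1$-small divergence-free perturbations of $Y_m$ staying inside $\mathcal{U}$.

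The core step is then a linear-algebraic argument, adapted from Bonatti--Gourmelon--Vivier but restricted to the volume-preserving class: absence of $m$-domination for $m$ large along a long cocycle lets one, by $C^0$-small rotations of the invariant flag at well-chosen instants, produce a nearby volume-preserving cocycle whose total Poincar\'e return map has a two-dimensional invariant plane on which both eigenvalues have equal modulus. A further small $SL$-valued rotation brings this pair onto the unit circle, and by composing with a tiny rotation one can arrange them to be a primitive $q$-th root of unity for some moderate $q$. Pulling this back via the Franks lemma yields $Y' \in \mathcal{U}$ carrying an elliptic periodic orbit $\gamma$ whose Poincar\'e return map has a rational rotation on a two-plane inside $N$.

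With $\gamma$ in hand I would apply a final local, divergence-free perturbation supported in a flow box around $\gamma$, again using the pasting lemma, to insert an invariant two-torus (obtained, e.g., by locally modifying the normal dynamics so that a level set of a first integral becomes $Y''$-invariant in a small tube around $\gamma$). The resulting $Y'' \in \mathcal{U}$ then has a proper open invariant region, contradicting the transitivity of every element of $\mathcal{U}$, and hence contradicting the robust transitivity of $X$.

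The main obstacle is the conservative linear algebra: unlike in the dissipative BGV argument one cannot freely contract or dilate individual directions, so each eigenvalue adjustment must be compensated elsewhere, and one must show that the cumulative rotations along the possibly very long orbit $\mathcal{O}(x_m)$ can be carried out with $C^1$-size of the ambient perturbation independent of $m$. This is exactly the point at which the conservative Franks' lemma together with the Arbieto--Matheus pasting lemma (and hence the $C^4$ hypothesis) is indispensable.
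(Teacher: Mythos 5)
Your overall strategy---argue by contradiction, use the conservative Franks-type lemma of Section~\ref{pl} to realize perturbations of the normal cocycle, and use BGV-style linear algebra on long non-dominated orbits to create ellipticity, then derive a proper invariant open set contradicting transitivity---is the same as the paper's. But two of your concrete steps have genuine gaps.

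First, your ``core step'' only produces a two-dimensional invariant plane of $P^{\pi}_{Y'}$ with a pair of unit-modulus eigenvalues, leaving the remaining $n-3$ normal directions uncontrolled. That is strictly weaker than what is needed: if even one normal eigenvalue stays off the unit circle, the periodic orbit is not elliptic and no perturbation supported in a tube around it yields an invariant open neighbourhood. The paper instead invokes the conservative version of Theorem~2.2 of Bonatti--Gourmelon--Vivier (Theorem~\ref{2.2}), whose conclusion is that \emph{all} eigenvalues of $\Phi_{B^{\star}}^{\pi(x)}(x)$ are real and of the \emph{same} modulus; since the return map has determinant $\pm 1$ by conservativity, they are all equal to $\pm 1$, so the realized orbit is genuinely elliptic in the sense of Lemma~\ref{linear2} (all $n-1$ normal eigenvalues of modulus one). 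Incidentally, a rotation cannot ``bring the pair onto the unit circle'' if their common modulus is not already $1$; equality of all moduli plus volume preservation is what forces modulus one.

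Second, your terminal contradiction does not work as stated: since $\dim M\geq 4$, an invariant two-torus (the orbit crossed with an invariant circle in a two-plane of $N$) has codimension at least $2$, so it neither separates $M$ nor bounds an open invariant region, and transitivity is not obstructed. The paper's route is different: once the orbit is fully elliptic, Lemma~\ref{linear2} (after a further perturbation ensuring $P^{\pi}_{Y}$ is diagonalizable) replaces the vector field in a tube by the suspension of the linear Poincar\'e map, for which an entire solid tubular neighbourhood $\gamma\times D(0;r)$ --- an open set --- is invariant. You would need to repair both steps (get \emph{all} normal eigenvalues onto the unit circle, and produce a full-dimensional invariant tube rather than a torus) for your argument to close.
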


Let us explain how Proposition~\ref{V2} is deduced from the previous result.

Fix  a robustly transitive vector field $X \in \mathfrak{X}_\mu^4(M)$ and let  $\mathcal{U}$ be a $C^1$-neighbourhood of $X$ as in the previous proposition and such that any $C^1$ vector field $Y \in \mathcal{U}$ is also robustly transitive. Consider $\ell$ and $\varrho$ given by  Proposition~\ref{VF}.

Let $x \in M$ be a point with dense $X^t$-orbit. Using Pugh and Robinson's volume-preserving closing lemma (\cite{PR}) we get a sequence of vector fields $X_n \in \mathfrak{X}_\mu^4(M)$, converging to $X$ in the $C^1$-topology, and, for each $n \in \mathbb{N}$, $X_n$ has a periodic orbit $\Gamma_n=\Gamma_n(t)$ of period $\pi_n$ such that $\lim_{n \rightarrow +\infty} \Gamma_n(0)=x$. In particular $\lim_{n \rightarrow +\infty} \pi_n=+\infty$. Therefore, for large $n$, we can apply Proposition~\ref{VF} to conclude that there is an $\ell$-dominated splitting for the linear Poincar\'e flow of $X_n$ over the orbit $\Gamma_n$; taking a subsequence if necessary, we can assume that the dimensions of the invariant bundles do not depend on $n$. Finally, as $\bigcap_N(\overline{\cup_N^{+\infty}\Gamma_n})=M$, by well known results on dominated splittings (see for example~\cite{BDV}) it follows that there exists an $\ell$-dominated splitting for the linear Poincar\'e flow of $X$ over $M\setminus Sing(X)$.

\end{section}

\begin{section}{Perturbation Lemmas}\label{pl}

In this section we state and prove three perturbation lemmas needed to obtain the main results of this article. In particular the Main Perturbation Lemma (Lemma~\ref{mpl}) is a kind of Franks' Lemma (\cite{F}) for conservative flows. As we mention before, for technical reasons, we required that the vector fields involved are of class $C^4$. One of the main tools to  obtain this result is the Arbieto and  Matheus Pasting Lemmas (\cite{AM}). We refer that their result and our Main Perturbation Lemma make use of a key result of Dacorogna and Moser (\cite{DM}).

We fix a vector field $X \in \mathfrak{X}_\mu^4(M)$, $\tau >0$, and a point $p \in M$ such that $X^t(p)\not=p$, for all $t \in [0, \tau]$. Define $\Gamma(p, \tau)=\{X^t(p);\,\, t \in [0, \tau]\}$. In the sequel up to a smooth conservative change of coordinates defined on a neighbourhood of $\Gamma(p,\tau)$ (see~\cite{Mo}) we can assume that we are working on  $\mathbb{R}^n$ and that $\frac{1}{\|X(p)\|}X(p)=\frac{\partial}{\partial x_1}=v$. Let $W\subset \mathbb{R}^n$ be the $(n-1)$-dimensional vectorial subspace orthogonal to the unitary vector $v$.

Given $r>0$ let $B_r(p)$ denote the ball of radius $r$, centered at $p$ and contained in $N_p=X(p)^{\perp}=W$. For  $r>0$ and $\delta >0$ define

$$\mathcal{T}=\mathcal{T}(p,\tau,r, \delta)=\bigcup_{t \in ]-\delta, \tau+\delta[} X^t(B_r(p)).$$

If $r>0$ and $\delta >0$ are small enough the set $\mathcal{T}$ is an open neighbourhood of $\Gamma(p,\tau)$; by definition this neighbourhood is foliated by orbits of the flow so we call it a \emph{flowbox}.

We fix a linear isometry $\iota_p \colon N_p \rightarrow W$  and choose a family $\{\iota_t\}_{t \in ]-\delta, \tau+\delta[}$, such that, for each $t \in ]-\delta, \tau+\delta[$, $\iota_t$ is a linear isometry from $N_{X^t(p)}$ onto $W$, $\iota_0=\iota_p$, and this family is $C^1$ on the parameter $t$. 

Given $q \in \mathcal{T}$, we write $q=\lambda_q v+w_q$, where $w_q \in W$ and $\lambda_q \in \mathbb{R}$. Define $\ell(t)=\int_0^t \|X(X^s(p))\|ds$; there exists $t_q \in ]-\delta, \tau + \delta[$  such that $\ell(t_q)=\lambda_q$. We note that $t_p=0$.

Let us now define the \emph{Poincar\'e flow} $\hat{X}^t$ associated to $X$ on $\mathcal{T}$. 

For $t$ such that $t_q+t \in ]-\delta, \tau + \delta[$ define

$$\hat{X}^t(q)= \ell(t_q+t)v+ \iota_{(t_q+t)}\circ P_X^{t}(X^s(p))\circ \iota_{t_q}^{-1}(w_q).$$
It is straightforward to see that $\hat{X}^0 \equiv Id$ and that 
$\hat{X}^{t+t^\prime}(q)=\hat{X}^t(\hat{X}^{t^\prime}(q))$, when defined. 

Let $\hat{X}$ be the  vector field associated to the flow $\hat{X}^t$. $\hat{X}$ is of class $C^2$ and it is divergence-free. To see this we first recall the Liouville formula
$$exp \left(\int_0^t div(\hat{X}(\hat{X}^s(q)))ds\right) =\det D\hat{X}^t(q).$$
Now a direct computation gives that the  matrix of $D\hat{X}^t(q)$ relatively to the decomposition $\mathbb{R}^n=W\oplus \langle v \rangle $ is
$$\begin{pmatrix} \frac{\|X(X^{t_q+t}(p))\|}{\|X(X^{t_q}(p))\|} & 0 \\ \ast & \iota_{t_q+t}\circ P_X^{t}(X^{t_q}(p))\circ \iota_{t_q}^{-1} \end{pmatrix}.$$
As $X^t$ is volume-preserving and the maps $\iota_s$ are linear isometries we get
$$\det D\hat{X}^t(q)=\frac{\|X(X^{t_q+t}(p))\|}{\|X(X^{t_q}(p))\|} \times \det P_X^{t}(X^{t_q}(p))=1,\,\,\forall t.$$
Thus, according to  Liouville's formula, it follows that  $div(\hat{X})=0$.

We also observe that $P_{\hat{X}}^t(q)=\iota_{t_q+t}\circ P_X^{t}(X^{t_q}(p))\circ \iota_{t_q}^{-1}$; in particular
$P_{\hat{X}}^t(0)=\iota_{t}\circ P_X^{t}(p) \circ \iota_0^{-1}$.

\begin{lemma}\label{hat}
Let $X \in \mathfrak{X}_\mu^4(M)$, $\tau >0$, and $p \in M$ such that $X^t(p)\not=p$, $\forall t \in [0, \tau]$.
There exists a $C^2$-conservative change of coordinates $\Phi$, defined on a neighbourhood of $\Gamma(p,\tau)$, such that $$\hat{X}=\Phi_{*}X \text{ and } \Phi(X^t(p))=\hat{X}^t(0),\,\,\forall t \in [0, \tau].$$

\end{lemma}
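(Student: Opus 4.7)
The plan is to construct $\Phi$ in two stages: first a naive flow-adapted conjugacy $\Phi_0$ that has the desired orbit structure but may fail to be volume-preserving, then a flow-commuting correction supplied by Dacorogna-Moser.

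For the first stage, parameterize the flowbox $\mathcal{T}$ by $\phi(s,x)=X^{s}(x)$ for $(s,x)\in \,]-\delta,\tau+\delta[\,\times B_{r}(p)$; for $r,\delta$ small, this is a $C^{4}$ diffeomorphism onto $\mathcal{T}$. Analogously, parameterize the corresponding flowbox $\hat{\mathcal{T}}$ of $\hat{X}$ around $\{\ell(s)v:s\in[0,\tau]\}$ by $\hat{\phi}(s,w)=\hat{X}^{s}(w)$ for $(s,w)\in\,]-\delta,\tau+\delta[\,\times(B_{r}(0)\cap W)$. Writing $g(x)=\iota_{p}(x-p)$ for the affine isometry identifying the two cross-sections, set
$$\Phi_{0}\;=\;\hat{\phi}\circ(\mathrm{id},g)\circ\phi^{-1}\colon\mathcal{T}\longrightarrow\hat{\mathcal{T}}.$$
Then $\Phi_{0}$ is automatically a $C^{2}$ diffeomorphism with $\Phi_{0}(X^{t}(p))=\hat{X}^{t}(0)$ for all $t\in[0,\tau]$, and $(\Phi_{0})_{*}X=\hat{X}$, since in both parameterizations the corresponding flow is generated by $\partial/\partial s$.

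For the second stage, observe that both $\mu$ and $\Phi_{0}^{*}\mu$ are $X$-invariant volume forms on $\mathcal{T}$, so their ratio $\rho:=d(\Phi_{0}^{*}\mu)/d\mu$ is a positive $C^{2}$ function constant along each $X$-orbit. A direct computation using the block form of $D\hat{X}^{s}$ displayed just before the statement shows that $\rho\equiv 1$ on the $X$-orbit of $p$. Viewing $\rho$ as a function on the cross-section $B_{r}(p)$ with $\rho(p)=1$, the goal reduces to producing a diffeomorphism $\theta$ of $B_{r}(p)$ fixing $p$ and equal to the identity near $\partial B_{r}(p)$ whose Jacobian cancels $\rho$; setting $\Theta(X^{s}(x))=X^{s}(\theta(x))$ and $\Phi=\Phi_{0}\circ\Theta$, a computation in flowbox coordinates reduces the condition $\Phi^{*}\mu=\mu$ to the prescribed-Jacobian equation
$$\theta^{*}\bigl((f_{2}\circ g)\,d^{n-1}x\bigr)\;=\;f_{1}\,d^{n-1}x,$$
where $f_{1},f_{2}$ are the transverse volume-flux densities of $X$ and $\hat{X}$ along the two cross-sections.

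Such a $\theta$ is supplied by the Dacorogna-Moser theorem (\cite{DM}), combined with the Arbieto-Matheus pasting technique (\cite{AM}): given two positive $C^{3}$ densities on the ball $B_{r}(p)$ of equal total mass, which can be arranged by a harmless renormalization and a slight shrinking of $r$, one obtains a $C^{3}$ diffeomorphism equal to the identity on $\partial B_{r}(p)$ and fixing $p$ (the last because $\rho(p)=1$ means the correction is trivial at $p$) that transports one density onto the other. Extending $\theta$ by $\Theta(X^{s}(x))=X^{s}(\theta(x))$ yields a $C^{2}$ diffeomorphism of $\mathcal{T}$ which commutes with the $X$-flow and fixes $\Gamma(p,\tau)$ pointwise, so that $\Phi=\Phi_{0}\circ\Theta$ simultaneously satisfies $\Phi_{*}X=\hat{X}$, $\Phi(X^{t}(p))=\hat{X}^{t}(0)$ and $\Phi^{*}\mu=\mu$. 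The main obstacle is the regularity bookkeeping: the Dacorogna-Moser solution eats one or two derivatives from the data $\rho,f_{1},f_{2}$, and the flow-equivariant extension $\Theta$ uses $X^{s}$ again, so arriving at a $C^{2}$ conjugacy forces the initial vector field to be of class $C^{4}$. This is precisely the reason for the $C^{4}$ assumption in the statement.
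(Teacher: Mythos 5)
Your construction is essentially correct and, at bottom, is the same mechanism the paper uses, but packaged differently. The paper's proof is a two-line reduction: it invokes Lemma 2.1 of \cite{BR} twice, obtaining conservative $C^2$ straightenings $\Psi$ and $\hat{\Psi}$ of $X$ and $\hat{X}$ to the constant field $T=\partial/\partial x_1$, and sets $\Phi=\hat{\Psi}^{-1}\circ\Psi$ (after normalizing so that $\Psi(p)=\hat\Psi(0)=0$). Your argument unwinds that black box: the flowbox parameterizations plus the Dacorogna--Moser correction on a transversal, transported along orbits, is exactly the content of the cited lemma, and you apply it once (directly from $X$ to $\hat X$) instead of twice (to the common model $T$). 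What your version buys is self-containedness and the explicit identification of the obstruction as a prescribed-Jacobian equation on the cross-section; what the paper's version buys is brevity and a cleaner regularity audit, since all the analytic work is quarantined in \cite{BR}.

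Two points in your writeup deserve tightening, though neither is fatal. First, Dacorogna--Moser with the identity boundary condition requires the two densities to have equal total mass, and $\int_{B_r}f_1$ and $\int_{B_r}f_2\circ g$ need not coincide; but the lemma imposes no boundary condition on $\Phi$, so you may simply drop the requirement that $\theta=\mathrm{id}$ near $\partial B_r(p)$ (or let $\theta$ map $B_r(p)$ onto a slightly different domain), and the mismatch evaporates. Second, your regularity count for $f_2$ is off as stated: $f_2$ is the transverse density of $\hat X$, which is only $C^2$, so $f_2$ cannot be claimed $C^3$ on those grounds. In fact the situation is better than you say: since $\hat\phi(s,w)=\ell(s)v+\iota_s\circ P_X^s(p)\circ\iota_0^{-1}(w)$ is linear in $w$ and the block-triangular determinant computation gives $\det D\hat\phi_{(s,w)}=\ell'(s)\det P_X^s(p)=\|X(p)\|$, the density $f_2$ is \emph{constant}, so the prescribed-Jacobian equation has $C^3$ data coming only from $f_1$ (hence from $X\in C^4$), and Dacorogna--Moser returns a $\theta$ of class better than $C^2$. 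With these two observations your proof closes completely.
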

\begin{proof}
We recall that $\Phi_{*}X(y)=D\Phi_{\Phi^{-1}(y)}X(\Phi^{-1}(y))$.
By Lemma 2.1 of~\cite{BR} we know that
there exists a conservative $C^{2}$
diffeomorphism $\Psi$ defined on a flowbox containing $\Gamma(p,\tau)$ such that $T=\Psi_{*}X$, where $T= \frac{\partial}{\partial x_1}$. Exactly in the same way there exists a conservative $C^{2}$
diffeomorphism $\hat{\Psi}$ defined on a flowbox containing $\Gamma(0,\tau)$ such that $T=\hat{\Psi}_{*}\hat{X}$. 

Up to translations defined on the hyperplane $\langle \frac{\partial}{\partial x_1} \rangle^{\perp}$ and shrinking the neigbourhoods of the definition of these maps, we can assume that $\Psi(p)=0$ and that $\hat{\Psi}(0)=0$. Finally, we define $\Phi=\hat{\Psi}^{-1}\circ \Psi$.
\end{proof}

\bigskip

Now let $V, V^{\prime} \subset N_p$, $dim(V)=j$, $2 \leq j \leq n-1$, and  $N_p=V \oplus V^{\prime}$. A \emph{one-parameter linear family} $\{A_t\}_{t\in \mathbb{R}}$ associated to $\Gamma(p, \tau)$ and $V$  is defined as follows:
\begin{itemize}
\item $A_t\colon N_p \rightarrow N_p$ is a linear map, for all $t\in \mathbb{R}$,
\item $A_t=Id$, for all $t\leq 0$, and $A_t=A_{\tau}$, for all $t\geq \tau$,
\item $A_t|_V \in SL(j, \mathbb{R})$, and $A_t|_{V^{\prime}}\equiv Id$, $\forall t \in [0, \tau]$, in particular we have  $\det(A_t)=1$, for all $t\in \mathbb{R}$, and
\item the family $A_t$ is $C^\infty$ on the parameter $t$.
\end{itemize}

\begin{lemma}(Main perturbation lemma)\label{mpl}

Given $\epsilon>0$ and a vector field $X \in \mathfrak{X}_\mu^4(M)$ there exists $\xi_0=\xi_0(\epsilon,X)$ such that $\forall \tau \in [1,2]$, for any periodic point $p$ of period greater than $2$, for any sufficient small flowbox $\mathcal{T}$ of $\Gamma(p, \tau)$ and for any one-parameter linear family $\{A_t\}_{t \in [0, \tau]}$ such that $\|  A_t^{\prime} A_t^{-1}\|<\xi_0$, $\forall t \in [0, \tau]$, there exists $Y \in \mathfrak{X}_\mu^1(M)$ satisfying the following properties
\begin{enumerate}
\item $Y$ is $\epsilon$-$C^1$-close to $X$;
\item $Y^t(p)=X^t(p)$, for all $t \in \mathbb{R}$;
\item $P_Y^\tau(p)=P_X^\tau(p) \circ A_{\tau}$, and
\item $Y|_{\mathcal{T}^c}\equiv X|_{\mathcal{T}^c}$.
\end{enumerate}
\end{lemma}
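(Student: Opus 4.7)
The plan is to realize the perturbation via a volume-preserving change of coordinates supported in the flowbox $\mathcal{T}$ around $\Gamma(p,\tau)$, and then to paste the result with $X$ on $\mathcal{T}^{c}$ using the Arbieto--Matheus Pasting Lemma. Applying Lemma~\ref{hat} first, one obtains a $C^{2}$-conservative chart $\Phi$ on a neighbourhood of $\Gamma(p,\tau)$ with $\hat{X}=\Phi_{*}X$ and $\Phi(X^{t}(p))=\ell(t)v$; along the orbit of $0$ the linear Poincar\'e flow reads $P_{\hat{X}}^{t}(0)=\iota_{t}\circ P_{X}^{t}(p)\circ\iota_{0}^{-1}$. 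It therefore suffices to construct a divergence-free perturbation $\hat{Y}$ of $\hat{X}$ on $\Phi(\mathcal{T})$ satisfying the analogues of (1)--(4) and to pull back by $\Phi^{-1}$.

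To build $\hat{Y}$, define a smooth family $\{\tilde{A}_{t}\}\subset SL(W)$ by $\tilde{A}_{t}:=\iota_{0}(A_{t}A_{\tau}^{-1})\iota_{0}^{-1}$ for $t\in[0,\tau+\delta)$, so that $\tilde{A}_{\tau}=\mathrm{Id}$ and $\tilde{A}_{t}=\mathrm{Id}$ for all $t\geq\tau$; extend to $(-\delta,0]$ by a smooth path in $SL(W)$ from $\mathrm{Id}$ near $t=-\delta$ to $\iota_{0}A_{\tau}^{-1}\iota_{0}^{-1}$ at $t=0$, for instance via $\tilde{A}_{t}=\exp(\chi(t)\log(\iota_{0}A_{\tau}^{-1}\iota_{0}^{-1}))$ with a standard cutoff $\chi$. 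Such a path exists because $SL(W)$ is path-connected and $\|A_{\tau}^{-1}-\mathrm{Id}\|$ is small by Gronwall applied to the hypothesis $\|A_{t}'A_{t}^{-1}\|<\xi_{0}$. Now set $\Psi(\lambda v+w):=\lambda v+\tilde{A}_{\ell^{-1}(\lambda)}(w)$ on $\Phi(\mathcal{T})$, extended by the identity outside. Since $\det\tilde{A}_{t}=1$ and $\tilde{A}_{t}=\mathrm{Id}$ near the time-boundary of the flowbox, $\Psi$ is a volume-preserving diffeomorphism fixing the $v$-axis pointwise and equal to the identity near $\partial\Phi(\mathcal{T})$. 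Therefore $\hat{Y}:=\Psi_{*}\hat{X}$ is divergence-free, agrees with $\hat{X}$ outside $\Phi(\mathcal{T})$, and shares the orbit of $0$ with $\hat{X}$. Using the block-diagonality of $D\Psi$ along the orbit one verifies $P_{\hat{Y}}^{\tau}(0)=\tilde{A}_{\tau}\,P_{\hat{X}}^{\tau}(0)\,\tilde{A}_{0}^{-1}=P_{\hat{X}}^{\tau}(0)\circ(\iota_{0}A_{\tau}\iota_{0}^{-1})$, which after conjugation by the isometries $\iota_{0}$ and $\iota_{\tau}$ translates into $P_{Y}^{\tau}(p)=P_{X}^{\tau}(p)\circ A_{\tau}$, i.e.\ condition (3).

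For the $C^{1}$-estimate, $\hat{Y}-\hat{X}$ is bilinear in the normal variable $w$ and in $\partial_{t}\tilde{A}_{t}$; on $[0,\tau]$ one has $\|\partial_{t}\tilde{A}_{t}\|\leq\xi_{0}\|A_{t}\|\|A_{\tau}^{-1}\|\leq\xi_{0}e^{4\xi_{0}}$, and on $[-\delta,0]$ the exponential interpolation gives a comparable bound (depending on the fixed $\delta$). Shrinking the flowbox radius $r$ and choosing $\xi_{0}=\xi_{0}(\epsilon,X)$ small enough thus yields $\|\hat{Y}-\hat{X}\|_{C^{1}}<\epsilon/2$. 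The main obstacle I anticipate is the final gluing: pulling $\hat{Y}$ back by $\Phi^{-1}$ and declaring the result equal to $X$ on $\mathcal{T}^{c}$ produces only $C^{0}$-matching at $\partial\mathcal{T}$, and the naive extension in general fails to preserve the divergence-free condition there. The Arbieto--Matheus Pasting Lemma (\cite{AM}), based on the Dacorogna--Moser theorem (\cite{DM}), is precisely what allows one to glue two divergence-free vector fields on overlapping regions into a globally $C^{1}$ divergence-free vector field at the cost of an arbitrarily small additional $C^{1}$ error; this step is also what forces the $C^{4}$-regularity hypothesis, since Dacorogna--Moser loses derivatives when solving the underlying divergence equation.
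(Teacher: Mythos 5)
Your route is genuinely different from the paper's: after the reduction via Lemma~\ref{hat}, the paper builds $\hat{Y}=\hat{X}+\mathcal{P}$ by prescribing the infinitesimal generator along the orbit through the linear variational equation (so that $P_{\hat Y}^t(0)=P_{\hat X}^t(0)\circ B_t$ for \emph{every} $t\in[0,\tau]$), whereas you conjugate $\hat X$ by a volume-preserving shear $\Psi$ supported in the flowbox. The algebra behind your condition (3) is correct, and your appeal to the Arbieto--Matheus Pasting Lemma for the gluing is the same device the paper uses (although at the lateral boundary of $\mathcal{T}$ there is not even $C^0$-matching; the Pasting Lemma does not need it, since it interpolates between an inner flowbox and the complement of a larger one).

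There is, however, a genuine gap in the quantifiers of your $C^1$ estimate, and it is intrinsic to the conjugation approach. Since $\Psi$ must equal the identity near both temporal ends of the flowbox (times $-\delta$ and $\tau+\delta$), while $D\Psi$ at time $0$ must equal $\tilde A_0=\iota_0A_\tau^{-1}\iota_0^{-1}$, which lies at distance of order $\xi_0$ from the identity, your interpolation on $(-\delta,0]$ forces $\|\partial_t\tilde A_t\|$ to be of order $\xi_0/\delta$ there. Differentiating $D\hat Y^t(0)=D\Psi_{\hat X^t(0)}\circ D\hat X^t(0)\circ D\Psi_0^{-1}$ in $t$ shows that $D\hat Y-D\hat X$ along the orbit contains the term $(\partial_t\tilde A_t)\,\tilde A_t^{-1}$, so $\|\hat Y-\hat X\|_{C^1}\gtrsim\xi_0/\delta$ on the entrance flap; this cannot be cured by shrinking $r$, and shrinking $\delta$ makes it worse. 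Hence your $\xi_0$ depends on $\delta$, i.e.\ on the flowbox, contradicting the statement ($\xi_0=\xi_0(\epsilon,X)$ must work for \emph{every} sufficiently small flowbox) and breaking the applications in Section~\ref{MG}, where the flowbox parameters are shrunk \emph{after} $\xi_0$ is fixed so that consecutive flowboxes glue. The obstruction is structural: a conjugacy $\Psi_*$ with $\Psi\equiv Id$ near $\partial\mathcal{T}$ cannot change the transition map across the whole flowbox, so the entire correction $A_\tau$ you insert between times $0$ and $\tau$ must be undone inside flaps of length $\delta$. The paper's additive perturbation avoids this because its generator correction $C_t=(P_{\hat X}^t(0)\circ B_t')\circ(B_t^{-1}\circ P_{\hat X}^{-t}(\hat X^t(0)))$ vanishes automatically for $t\notin[0,\tau]$ and has norm controlled by $\|A_t'A_t^{-1}\|<\xi_0$ over a time interval of length $\tau\geq 1$, with no factor $1/\delta$.
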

\begin{proof}

Using Lemma~\ref{hat} we get a $C^2$ change of coordinates $\Phi$, defined in a flowbox $\mathcal{T}=\mathcal{T}(p, \tau, r, \delta)$ and such that  $\Phi_{*}X=\hat{X}$ and $\Phi(X^t(p))=\hat{X}^t(0)$, $\forall t \in [0, \tau]$, where $\hat{X}^t$ is the Poincar\'e flow associated to $X$ on $\mathcal{T}$ defined above. 

To obtain the vector field $Y$ we first construct a $C^2$ divergence-free vector field $\hat{Y}$ defined on $\Phi(\mathcal{T})$ and such that
\begin{description}
\item (a) $\hat{Y}$ is $\hat{\epsilon}$-$C^1$-close to $\hat{X}$;
\item (b) $\hat{Y}^t(0)=\hat{X}^t(0)$, when defined;
\item (c) $P_{\hat{Y}}^t(0)=P_{\hat{X}}^t(0) \circ B_{t}$, where $B_{t}=\iota_p \circ A_t$, $t\in [0, \tau]$, and
\item (d) $\hat{Y}|_{\hat{\mathcal{T}}^c}\equiv \hat{X}|_{\hat{\mathcal{T}}^c}$, where ${\hat{\mathcal{T}_2}}=\Phi(\mathcal{T}(p, \tau, r_2, \delta_2))$, for some $0<r_2<r$ and $0<\delta_2<\delta$ to be fixed.
\end{description}

The positive real number $\hat{\epsilon}$ depends only on $\Phi$ and $\mathcal{T}$ and assures that if $Z$ is $\hat{\epsilon}$-$C^1$-close to $\hat{X}$ on $\Phi(\mathcal{T})$ then ${\Phi^{-1}}_{\ast}(Z)$ is $\epsilon$-$C^1$-close to $X$ on $\mathcal{T}$.

Once we get $\hat{Y}$ we define $\tilde{Y}={\Phi^{-1}}_{\ast}(\hat{Y})$ and, as $\tilde{Y}\equiv X$ on $\mathcal{T} \setminus \mathcal{T}(p, \tau, \frac{r}{2}, \frac {\delta} {2})$, we consider $Y=\tilde{Y}$ on $\mathcal{T}$ and $Y=X$ on $\mathcal{T}^c$. We observe that we can only guaranty that $Y$ is of class $C^1$.

From this construction it follows immediately that items \emph{1.}, \emph{2.} and \emph{4.} of the lemma are a direct consequence of conditions \emph{(a)}, \emph{(b)} and \emph{(d)} on $\hat{Y}$, respectively. 

To get item \emph{3.} we observe that our construction  of $\hat{Y}$ will imply that $P_{\hat{Y}}^t(p)=\iota_t \circ P_{\tilde{Y}}^t(p) \circ \iota_p^{-1}$ where, recall, $\tilde{Y}={\Phi^{-1}}_{\ast}(\hat{Y})$. Therefore, as $P_{\hat{X}}^t(p)=\iota_t \circ P_{X}^t(p) \circ \iota_p^{-1}$ and $B_{t}=\iota_p \circ A_t$, from condition \emph{(c)} we obtain \emph{3.}.

Let us now explain how to  construct the vector field $\hat{Y}$ defined on $\Phi(\mathcal{T})$.

The linear variational equation associated to the linear Poincar\'e flow of $\hat{X}$ is
$$[P_{\hat{X}}^t(0)]^\prime=(\Pi \circ D\hat{X}_{\hat{X}^t(0)})(P_{\hat{X}}^t(0)),$$
where $^\prime$ denotes the time derivative, $D$ is the spacial derivative and $\Pi$ is the orthogonal projection onto $N_{\hat{X}^t(0)}$. To get $\hat{Y}$ we begin by considering an analogous linear variational equation associated to $P_{\hat{Y}}^t(0)$ in order to obtain $D\hat{Y}$ along the orbit of the point $0$ and then define in a linear way the flow $\hat{Y}^t$.

Since we require that $P_{\hat{Y}}^t(0)=P_{\hat{X}}^t(0) \circ B_{t}$ we have that

\begin{eqnarray*}
[P_{\hat{Y}}^t(0)]^\prime&=&[P_{\hat{X}}^t(0) \circ B_{t}]^\prime=[P_{\hat{X}}^t(0)]^\prime \circ B_{t} +  P_{\hat{X}}^t(0) \circ {B_{t}}^{\prime}\\
&=& (\Pi \circ D\hat{X}_{\hat{X}^t(0)})(P_{\hat{X}}^t(0)) \circ B_t +  P_{\hat{X}}^t(0) \circ {B_{t}}^{\prime}\\
&=& \left[\Pi \circ D\hat{X}_{\hat{X}^t(0)}+ (P_{\hat{X}}^t(0) \circ {B_{t}}^{\prime}) \circ (B_t^{-1} \circ P_{\hat{X}}^{-t}(\hat{X}^t(0)))\right]\circ P_{\hat{Y}}^t(0).
\end{eqnarray*}
This equation allows us to define, along the $\hat{X}^t$ orbit of $0$, the infinitesimal generator $D\hat{X}+\mathcal{H}$ of the desired perturbation, where $\mathcal{H}_{\hat{X}^t(0)}(v)=\vec{0}$ and
\begin{equation}\label{C}
\Pi \circ \mathcal{H}_{\hat{X}^t(0)}=(P_{\hat{X}}^t(0) \circ {B_{t}}^{\prime}) \circ (B_t^{-1} \circ P_{\hat{X}}^{-t}(\hat{X}^t(0)))=C_t.
\end{equation}

With this definition the previous differential equation can be written as:
\begin{equation}\label{D}
u^{\prime}(t)=\Pi \circ (D\hat{X}+ \mathcal{H})_{\hat{X}^t(0)}(u(t)).
\end{equation}
Let $\mathcal{P}(\lambda v, w)=(0, C_t(w))$, where $t$ is given by $\int_0^t\|\hat{X}(\hat{X}^s(0))\|ds= \lambda$; observe that 
\begin{equation}\label{W}
D_w\mathcal{P}(\hat{X}^t(0))=\Pi \circ \mathcal{H}_{\hat{X}^t(0)},
\end{equation}
and that 
\begin{equation}\label{U}
D_w\mathcal{P}(\hat{X}^t(0))(0,u)= \Pi \circ D\mathcal{P}_{\hat{X}^t(0)}(0,u).
\end{equation}

Now we define the $C^2$ vector field $\hat{Y}(q)=(\hat{X} + \mathcal{P})(q)$,  for  $q \in \Phi(\mathcal{T}(p,\tau, r_1, \delta_1))$, where $0<r_1<r$ and $0<\delta_1<\delta$ will be fixed. In order to extend this vector field let us first prove that it is divergence-free.

As $\hat{Y}=\hat{X} + \mathcal{P}$ and $\hat{X}$ is divergence-free, using (\ref{W}), (\ref{C}) and the definition of the maps $B_t$, it follows that
$$div(D\hat{Y})=div(D\mathcal{P})=tr(C_t)=tr(B_t^{\prime} \circ B_t^{-1})=tr(A_t^{\prime} \circ A_t^{-1}).$$
Now, as $\det(A_t)=1, \forall t \in \mathbb{R}$, the result follows observing that
$$0=(\det(A_t))^{\prime}=tr(A_t^{\prime}A_t^{-1})\det(A_t)=tr(A_t^{\prime}A_t^{-1}).$$
Now, to extend  $\hat{Y}$ to a conservative vector field, we apply the Arbieto and Matheus \emph{$C^{1+\alpha}$ -Pasting Lemma} (Theorem 3.1,~\cite{AM}) which guaranties that there are $0<r_1<r_2<r$ and $0<\delta_1<\delta_2<\delta$ such that $\hat{Y}$ has a divergence-free $C^2$ extension to $\Phi(\mathcal{T}(p,\tau, r, \delta))$, that we also denote by $\hat{Y}$, with $\hat{Y}(q)=\hat{X}(q)$, for all $q \in \Phi(\mathcal{T}(p,\tau, r, \delta)) \setminus \Phi(\mathcal{T}(p,\tau, r_2, \delta_2))$.

Let us now prove that this vector field $\hat{Y}$ satisfies properties \emph{(a)}-\emph{(d)}.

Condition \emph{(d)} is a direct consequence of the way we made the extension of $\hat{Y}$. To get \emph{(b)} just observe that 
\begin{eqnarray*}
\hat{Y}(\hat{X}^t(0))&=&\hat{X}(\hat{X}^t(0))+\mathcal{P}\left(\int_0^t \|\hat{X}(  \hat{X}^s(0))\|ds,\vec{0}\right)\\
&=&\hat{X}(\hat{X}^t(0))+(0,C_t(\vec{0}))=\hat{X}(\hat{X}^t(0)).
\end{eqnarray*}

To get \emph{(c)} let us first remark that the linear Poincar\'e flow of $\hat{Y}$ at $0$, $P_{\hat{Y}}^t(0)$, is the solution of the differential equation $u^{\prime}(t)=\Pi \circ D\hat{Y}_{\hat{Y}^t(0)}(u(t))$. By equations~(\ref{U}) and~(\ref{W}) we have that 
$$ \Pi \circ D\hat{Y}_{\hat{Y}^t(0)}(u(t))=\Pi \circ D(\hat{X}+\mathcal{P})_{\hat{X}^t(0)}=\Pi \circ D\hat{X}_{\hat{X}^t(0)}+ \Pi \circ \mathcal{H}_{\hat{X}^t(0)}.$$
Hence, by~(\ref{C}) and~(\ref{D}), we get that $P_{\hat{Y}}^t(0)=P_{\hat{X}}^t(0) \circ B_{t}$.

To prove condition \emph{(a)} we begin by observing that the Pasting Lemma guaranties that given $\hat{\epsilon}$ there exists $\sigma>0$ such that if the vector field $\hat{Y}$ we constructed is $\sigma$-$C^1$ close to $\hat{X}$ on $\Phi(\mathcal{T}(p,\tau, r_1, \delta_1))$ then its extension to $\Phi(\mathcal{T}(p,\tau, r, \delta))$ is $\hat{\epsilon}$-$C^1$ close to $\hat{X}$, and to take $r_1$ and $\delta_1$ smaller does not change this $C^1$ closeness.
So let us prove that $\hat{Y}$ is $\sigma$-$C^1$ close to $\hat{X}$ on $\mathcal{T}_1=\Phi(\mathcal{T}(p,\tau, r_1, \delta_1))$ for sufficiently small $r_1$, $\delta_1$ and $\xi_0$.

Recalling that $\hat{Y}=\hat{X}+ \mathcal{P}$, that $\mathcal{P}(\hat{X}^t(0), \vec{0})=0$ and that $\mathcal{P}$ is continuous, to choose $r_1$ and $\delta_1$ small is enough to assure $\|\hat{Y}-\hat{X}\|_0< \sigma$ on $\mathcal{T}_1$. 

We observe that the matrix of $D\mathcal{P}(\lambda,w)$ depends only on  map $C_t$ and on real numbers $$\left|\frac{\partial [C_t(w)]^i}{\partial t} \frac{\partial t}{\partial  \lambda}w_i \right|, \,\,\,i \in \{1,...,n-1\},$$ where $w=(w_1,...,w_{n-1})$ and $\|w\|<r_1$. By the definition of $C_t$~(see~(\ref{C})), up to constants that depend only on $X$, its norm is given by $$\|B^{\prime}_t\circ B_t^{-1}\|=\|\iota_p \circ A^{\prime}_t\circ A_t^{-1} \circ \iota_p^{-1}\|= \|A^{\prime}_t\circ A_t^{-1}\|,$$  
because the map $\iota_p$ is an isometry. Hence, by the hypothesis, it is enough to take $\xi_0$ sufficiently small
to get that $\|D\mathcal{P}\| < \sigma$. This ends the proof of the lemma. \end{proof}

\begin{lemma}\label{linear}
Let $p$ be a singularity of $X \in \mathfrak{X}_\mu^1(M)$. For any $\epsilon >0$ there exists  $Y \in  \mathfrak{X}_\mu^{\infty}(M)$, such that $Y$ is $\epsilon$-$C^1$-close to $X$ and  $p$ is a linear hyperbolic singularity of $Y$.
\end{lemma}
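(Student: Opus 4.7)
The plan is to combine Zuppa's density theorem (to gain smoothness) with the Arbieto--Matheus pasting lemma (to install a linear structure near $p$), working in local Moser coordinates around $p$.

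First I would apply Zuppa's theorem to obtain $X_1 \in \mathfrak{X}_\mu^{\infty}(M)$ with $\|X_1 - X\|_{C^1} < \delta$, where $\delta>0$ is a parameter to be fixed later; note that $|X_1(p)| \leq \delta$, but $p$ need not be a singularity of $X_1$. Using Moser's theorem (\cite{Mo}), fix a smooth conservative chart around $p$ in which the volume form is Lebesgue measure and $p$ corresponds to $0$. Setting $A = DX_1(0)$, we have $\mathrm{tr}(A)=0$ because $X_1$ is divergence-free. Since hyperbolic trace-zero linear maps are dense in the space of all trace-zero linear maps on $\mathbb{R}^n$ (recall $n = \dim M \geq 4$), I choose a hyperbolic trace-zero $L$ with $\|L - A\| < \delta$. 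The smooth divergence-free linear vector field $V(x) = L\,x$ then has $0$ as a linear hyperbolic (saddle-type) singularity.

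Next I would paste $V$ on a small ball $B_r(p)$ with $X_1$ outside $B_{2r}(p)$ via the Arbieto--Matheus Pasting Lemma (\cite{AM}). To invoke that lemma one needs $V$ and $X_1$ to be $C^1$-close on the pasting region. A direct estimate gives
\begin{align*}
\|V - X_1\|_{C^0(B_{2r})} &\leq |X_1(0)| + 2r\,\|L - A\| + o(r) \leq 2\delta + O(r),\\
\|DV - DX_1\|_{C^0(B_{2r})} &\leq \|L - A\| + \omega_{DX_1}(2r) \leq \delta + \omega_{DX_1}(2r),
\end{align*}
where $\omega_{DX_1}$ is the modulus of continuity of $DX_1$ at $0$. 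By first choosing $r$ small enough to absorb the $O(r)$ and $\omega_{DX_1}(2r)$ terms, and then $\delta$ small enough, both $C^1$-distances can be made as small as required.

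The Pasting Lemma then yields $Y \in \mathfrak{X}_\mu^{\infty}(M)$ with $Y \equiv V$ on some ball $B_{r'}(p) \subset B_r(p)$, $Y \equiv X_1$ on $M \setminus B_{2r}(p)$, and $\|Y - X_1\|_{C^1} < \epsilon/2$. In the Moser chart, $Y$ coincides with the linear field $L\,x$ in a neighbourhood of $0$, so the chart itself provides smooth local coordinates conjugating $Y$ to $DY_p = L$, showing that $p$ is a linear hyperbolic singularity of $Y$. Finally $\|Y - X\|_{C^1} \leq \|Y - X_1\|_{C^1} + \|X_1 - X\|_{C^1} < \epsilon$. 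The main technical obstacle is the hierarchy of small parameters ($r$ must be chosen before $\delta$ and $\|L - A\|$ so that the modulus-of-continuity term is absorbed), together with confirming that the conservative pasting lemma preserves $C^{\infty}$ regularity on smooth inputs.
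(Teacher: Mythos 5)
Your proposal is correct and follows essentially the same route as the paper: a Moser conservative chart at $p$, replacement of the (trace-zero) derivative at $p$ by a nearby hyperbolic trace-zero matrix, and gluing of the resulting linear vector field to the original one via the Arbieto--Matheus $C^1$-pasting lemma. The only difference is your preliminary Zuppa smoothing step, which the paper omits because Theorem 3.2 of \cite{AM} is invoked directly on the $C^1$ field $X$ and already delivers a $C^{\infty}$ conservative output.
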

\begin{proof}
Let $(U,\phi)$ be a conservative chart given by Moser's Theorem (\cite{Mo}) such that $p \in U$ and $\phi(p)=0$. Let $A=DX_p$ and, for arbitrarily small $\tilde{\delta}>0$, choose a linear and hyperbolic isomorphism $H=H_{\delta}$ such that $\|A-H\|<\delta$. We fix small $r>0$ such that $B(0;r)\subset \phi(U)$ and consider the pull-back of $H$,  $Z=Z_{\tilde{\delta},r}={(\phi^{-1})}_\ast H$, defined on $\phi^{-1}(B(0;r))$.

For any small $\delta>0$ there are $\tilde{\delta}>0$ and $r>0$ such that $X$ and $Z$ are $\delta$-$C^1$-close. Therefore a straightforward application of the $C^1$-pasting lemma (Theorem 3.2, \cite{AM}) to $X$ and $\epsilon$ (which give a $\delta$) guarantees that there exists a vector field $Y \in  \mathfrak{X}_\mu^{\infty}(M)$ such that $Y$ is $\epsilon$-$C^1$-close to $X$ and $Y|_W=Z_W$, where $W=\phi^{-1}(B(0; \frac{r}{2}))$. From the construction it follows that $p$ is a linear hyperbolic singularity of $Y$.
\end{proof}

\begin{lemma}\label{linear2}
Let $X \in \mathfrak{X}_\mu^1(M)$ and assume that $p \in M$ is a periodic elliptic point of period $\pi(p)> 1$. Then,
for any $\epsilon >0$  there exist $Z \in  \mathfrak{X}_\mu^{\infty}(M)$ and a tubular neighbourhood $\mathcal{U}$ of the $X^t$-orbit of $p$, such that $Z$ is $\epsilon$-$C^1$-close to $X$ and  $\mathcal{U}$ is $Z^t$-invariant.
\end{lemma}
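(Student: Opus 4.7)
The plan is to construct $Z$ by locally replacing $X$, in a thin flowbox around the orbit $\gamma := \{X^t(p):t\in[0,\pi(p)]\}$, by a suspension of a one-parameter group of linear isometries of an adapted inner product on the normal bundle; any such suspension automatically preserves every concentric tube around $\gamma$, which yields the invariant tubular neighborhood $\mathcal U$.

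\emph{Step 1 (Adapted linear data).} Let $\tau:=\pi(p)$ and $A:=P_X^\tau(p)\in SL(N_p)$; by hypothesis the eigenvalues of $A$ all lie on the unit circle. A small linear perturbation gives $A'\in SL(N_p)$ with $n-1$ simple unit-modulus eigenvalues, hence semisimple; $A'$ then admits an invariant positive-definite inner product $\langle\cdot,\cdot\rangle_*$ on $N_p$. Using surjectivity of the exponential map onto the corresponding orthogonal group, pick a trace-free $S\in\mathfrak{gl}(N_p)$, skew-adjoint for $\langle\cdot,\cdot\rangle_*$, with $e^{S\tau}=A'$. The family $R_t:=e^{St}$ is then a one-parameter group of $\langle\cdot,\cdot\rangle_*$-isometries. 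The hypothesis $\tau>1$ enters here through the bound $\|R_t'R_t^{-1}\|=\|S\|\leq C\log\|A'\|/\tau$, which keeps the logarithmic generator uniformly small regardless of how close the eigenvalues of $A$ already are to one.

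\emph{Step 2 (Smoothing and straightening).} Approximate $X$ in the $C^1$-topology by a $C^\infty$ divergence-free $\widetilde X$ via Zuppa's theorem (\cite{Z}), so that the regularity hypotheses of Lemma~\ref{hat} are satisfied. A standard tubular-neighborhood argument, together with Lemma~\ref{hat} applied on two overlapping subarcs of $\gamma$, yields a $C^2$ conservative chart $\Phi$ on a flowbox $\mathcal V$ around $\gamma$ in which the straightened field $\hat{\widetilde X}:=\Phi_*\widetilde X$ is of the Poincar\'e type described immediately before Lemma~\ref{hat}. In the coordinates $(s,w)\in\mathbb R/\tau\mathbb Z\times N_p$ I define the $C^\infty$ divergence-free vector field
\[
\hat Y(s,w):=\hat{\widetilde X}(s,0)+(0,Sw);
\]
its flow reads $(s,w)\mapsto(s+t,R_t w)$, so every tube $\hat{\mathcal U}_\rho:=\{|w|_*<\rho\}$ with $\rho$ small is $\hat Y^t$-invariant. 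Divergence-freeness follows from $\mathrm{tr}(S)=0$ together with the divergence-freeness of $\hat{\widetilde X}$.

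\emph{Step 3 (Pasting and conclusion).} I would apply the Arbieto--Matheus $C^{1+\alpha}$-Pasting Lemma (Theorem~3.1 of~\cite{AM}) to glue $\hat Y$ on an inner tube $\hat{\mathcal U}:=\hat{\mathcal U}_{\rho_1}$ with $\hat{\widetilde X}$ outside a slightly larger concentric tube $\hat{\mathcal U}_{\rho_2}$. This produces a $C^\infty$, divergence-free field $\hat Z$ on $\mathcal V$, coinciding with $\hat Y$ on $\hat{\mathcal U}$ and with $\hat{\widetilde X}$ near $\partial\mathcal V$, and globally $\epsilon'$-$C^1$-close to $\hat{\widetilde X}$, where $\epsilon'$ is controlled by $\|S\|$. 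Setting $Z:=\Phi^{-1}_*\hat Z$ on $\Phi^{-1}(\mathcal V)$ and $Z:=X$ elsewhere gives $Z\in\mathfrak{X}_\mu^\infty(M)$, $\epsilon$-$C^1$-close to $X$, and $\mathcal U:=\Phi^{-1}(\hat{\mathcal U})$ is a tubular neighborhood of $\gamma$ with $Z^t(\mathcal U)=\mathcal U$ for every $t$, because on $\mathcal U$ the flow reduces to the isometric suspension.

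The main obstacle lies in Step~3: obtaining a globally $C^\infty$, divergence-free output that is simultaneously $\epsilon$-$C^1$-close to $X$ is precisely what the Arbieto--Matheus Pasting Lemma delivers, and it is what forces the preliminary Zuppa regularization. A secondary technical point is the quantitative control of $\|S\|$ in Step~1; the hypothesis $\pi(p)>1$ is what makes the size of the perturbation bounded independently of how degenerate the spectrum of $A$ is.
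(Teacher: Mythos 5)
There is a genuine gap in Step 2, and it is fatal to the construction as written. The field $\hat Y(s,w)=\hat{\widetilde X}(s,0)+(0,Sw)$ has, along the orbit $\{w=0\}$, normal derivative equal to the \emph{constant} matrix $S=\frac{1}{\tau}\log A'$, whereas $\hat{\widetilde X}$ has normal derivative equal to the \emph{time-dependent} infinitesimal generator $M_s=\Pi\circ D\widetilde X_{X^s(p)}$ of the linear Poincar\'e flow. Shrinking the tube only controls the $C^0$-distance; the $C^1$-distance is bounded below by $\sup_s\|S-M_s\|$, and two generators can have the same (or arbitrarily close) time-$\tau$ solution operators while being far apart. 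Concretely, if $M_s=\mathrm{diag}(a,-a)$ on the first half of the period and $\mathrm{diag}(-a,a)$ on the second half with $a$ large, then $P_X^{\tau}(p)=Id$ is elliptic and $S=0$ (or a tiny rotation generator), yet $\|\hat Y-\hat{\widetilde X}\|_{C^1}\approx a$ on every tube around $\gamma$; note that in this example the round tube is indeed badly non-invariant for $X^t$ at intermediate times, so no $C^1$-small correction can make it invariant. Your bound $\|R_t'R_t^{-1}\|=\|S\|\le C\log\|A'\|/\tau$ controls the wrong quantity: what must be small is $\|S-M_s\|$, not $\|S\|$, and the hypothesis $\pi(p)>1$ does not help with that. (Secondary issues: the perturbation $A\mapsto A'$ making the return map semisimple must itself be realized by a $C^1$-small perturbation of the vector field, which is not automatic; and the flow of $\hat Y$ is not literally $(s,w)\mapsto(s+t,R_tw)$ because $\dot s=\|X(X^s(p))\|$.)

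The paper resolves exactly this tension the other way around: it keeps the field essentially equal to the linearization of $X$ along $\gamma$ (the suspension of the full linear Poincar\'e flow $P_X^t$, whose generator is $\Pi\circ DX$), so that $\delta$-$C^1$-closeness on a thin tube is automatic, and instead adapts the \emph{shape of the tube} to the dynamics: the cross-section over $X^t(p)$ is $P_X^t(p)(D)$, where $D\subset N_p$ is a disk for an inner product that orthonormalizes an eigenbasis of $P_X^{\pi(p)}(p)$; the tube closes up precisely because the return map preserves $D$. This requires $P_X^{\pi(p)}(p)$ to be semisimple, and the non-diagonalizable case is first reduced to the semisimple one by a perturbation of the return map (e.g.\ turning a unipotent $2\times 2$ block into an elliptic rotation) realized as a vector-field perturbation via the Main Perturbation Lemma (Lemma~\ref{mpl}) applied after a preliminary smoothing. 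Your instinct to make the return map conjugate to an isometry is the right first step, but the invariance must then be obtained by propagating the adapted disk with $P_X^t$, not by replacing the generator with its ``average'' $S$. Also, for the pasting you want Theorem~3.2 of~\cite{AM} (the $C^1$-pasting lemma with $C^\infty$ output), as the paper uses, rather than Theorem~3.1.
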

\begin{proof}

Let $p \in M$ is a periodic elliptic point of period $\pi(p)> 1$ of $X$ and denote its orbit by $\gamma=\Gamma(p, \pi(p))$. As $p$ is elliptic all the eigenvalues of the linear Poincar\'e map $P_X^{\pi(p)}(p):N_p \rightarrow N_p$ have modulus one. 

Let us first assume that  the map $P_X^{\pi(p)}(p)$ admits a basis formed by eigenvectors. We consider the inner product defined on $N_p$ and associated to this basis, that is the one that orthonormalizes the fixed basis.
 For $r>0$ let $D(0;r) \subset N_p$ denote the $(n-1)$-disk  centered at $0 \in N_p$ and of radius $r$ for the distance associated to this inner product. Note that $D(0;r)$ is $P_X^{\pi(p)}(p)$-invariant.

Let $Y$ be the divergence-free linear vector field associated to the flow obtained by suspending $P_X^{\pi(p)}(p)$ along $\gamma$; this vector field is defined in a tubular neighbourhood  of $\gamma$, $\mathcal{U}(r)$, which is homeomorphic to $\gamma \times D(0;r)$.

Given $\delta>0$ we can choose a small $r$ such that $X$ and $Y$ are $\delta$-$C^1$-close on $\mathcal{U}(r)$. Now, for fixed $\epsilon$ and an appropriate $\delta$ we apply the $C^1$-pasting lemma (Theorem 3.2, \cite{AM}) to get a vector field $Z \in  \mathfrak{X}_\mu^{\infty}(M)$ such that $X$ and $Z$ are  $\epsilon$-$C^1$-close on $M$ and $Z|_{\mathcal{U}}=Y|_{\mathcal{U}}$, where $\mathcal{U}=\mathcal{U}(\frac{r}{2})$. It follows from this construction that $\mathcal{U}$ is $Z^t$-invariant.

Assume now that $P_X^{\pi(p)}(p)$ does not admit a basis formed by eigenvectors, that is there exists at least one eigenvalue whose multiplicity is bigger than the dimension of the associated eigenspace. Let us first explain how to deal with the simplest case, that is when $dim(M)=3$, $P_X^{\pi(p)}(p)$ has only one eigenvalue, say equal to $1$, and the associated eigenspace is one-dimensional. In this case we will perturb $X$ in order to get complex eigenvalues. 

For that and exactly as before we begin by  $\frac{\epsilon}{2}$-$C^1$-approximate $X$ by a $C^{\infty}$ vector field $Y$ linear in a neighbourhood of $\gamma$ and such that $P_X^{\pi(p)}(p)=P_Y^{\pi(p)}(p)$.
Now, there exists a basis of $N_p$ such that relatively to this basis $P_Y^{\pi(p)}(p)$ has matrix 
$$
\begin{pmatrix}
1&0\\1&1
\end{pmatrix}.$$
Consider the one-parameter linear family  
$$A_t=
\begin{pmatrix}
1&-\delta \alpha(t) \\0&1
\end{pmatrix},$$
where $\alpha(t)$ is a $C^{\infty}$-bump function, $\alpha(t)=1$, for all $t \geq 1$, $\alpha(t)=0$, for all $t \leq 0$, and $\delta>0$ is arbitrarily small. 

As $\|A_t^{\prime} \circ A_t^{-1}\| \leq \delta |\alpha^{\prime}(t)|$, choosing $\delta$ small enough we can apply Lemma~\ref{mpl} to the arc $\Gamma(p,1)$ in order to obtain a new  vector field $Z \in \mathfrak{X}_\mu^{\infty}(M)$, $\frac{\epsilon}{2}$-$C^1$-close to $Y$,  such that $p$ is a periodic orbit of $Z$ of period $\pi(p)$ and
$P_Z^1(p)=P_Y^1(p)\circ A_1=P_X^1(p)\circ A_1$. As 
$$P_Z^{\pi(p)}(p)= P_Z^{\pi(p)-1}(Z^1(p)) \circ  P_Z^1(p)=P_X^{\pi(p)-1}(X^1(p)) \circ  P_X^1(p) \circ A_1,$$
it follows that the matrix of  $P_Z^{\pi(p)}(p)$ with respect to the basis we fixed above is
$$
\begin{pmatrix}
1&-\delta\\1&1-\delta
\end{pmatrix}.$$
Therefore $p$ is an elliptic point of $Z$ and $P_Z^{\pi(p)}(p)$ has two complex eigenvalues. Moreover, it is clear that $Z$ and $X$ are $\epsilon$-$C^1$-close.

If the eigenvalue is equal to $-1$ we proceed in the same way considering the matrix
$$A_t=
\begin{pmatrix}
1&\delta \alpha(t) \\0&1
\end{pmatrix}.$$

Finally, to deal with the general case (several eigenvalues, real or complex, whose multiplicity is greater than the dimension of the corresponding eigenspace) we just have to apply the previous argument a finite number of times in order to get a new vector field $Z$ such that $p$ is a periodic elliptic orbit, of period $\pi(p)$, and such that $P_Z^{\pi(p)}(p)$ admits a basis of eigenvectors.
\end{proof}

\end{section}

\begin{section}{Proof of Proposition~\ref{VF}}\label{MG}

\begin{subsection}{Conservative Linear Differential Systems}
We begin this section by recalling some definitions  introduced in~\cite{BGV}.

Let $X \in \mathfrak{X}_\mu^1(M)$ and consider a set $\Sigma \subset M$ which is a countable union of periodic orbits of $X^t$. A \emph{Linear Differential System} (LDS) is a four-tuple $\mathcal{A}=(\Sigma, X^t, N_{\Sigma}, A^{\star})$, where $N_{\Sigma}$ is the restriction to $\Sigma$ of the normal bundle of $X$ over $M\setminus Sing(X)$ and $A^{\star} \colon  \Sigma \rightarrow GL(n-1,\mathbb{R})$ is a continuous map. In fact, for $x \in \Sigma$, $A^{\star}_x$ is a linear map of $N_x$ and we identify this space with $\mathbb{R}^{n-1}$. The natural LDS associated to the dynamics of the vector field is obtained by taking $A^{\star}_x=\Pi \circ DX_x$. 

Given an LDS $\mathcal{A}=(\Sigma, X^t, N_{\Sigma}, A^{\star})$ the \emph{linear variational equation} associated to it is
\begin{equation}\label{lve}
\dot{u}(t,x)=A^{\star}(X^t(x)) \cdot u(t,x).
\end{equation}

The solution of (\ref{lve}) with initial condition $u(0,x)=Id$ is, for each $t$ and $x$, a linear map $\Phi_{A^{\star}}^t(x) \colon N_x \rightarrow N_{X^t(x)}$. The map $A^{\star}$ is called the \emph{infinitesimal generator} of $\Phi_ {A^{\star}}$; it is easy to see that  $\Phi_{A^{\star}}^t(x)=P_X^t(x)$ when the infinitesimal generator is $\Pi \circ DX$.

The LDS $\mathcal{A}=(\Sigma, X^t, N_{\Sigma}, A^{\star})$ is \emph{bounded} if there exists $K>0$ such that $\|A^{\star}_x\| \leq K$, for all $x \in \Sigma$. The LDS $\mathcal{A}$ is said to be a \emph{large period system} if the number of orbits of $\Sigma$ with period less or equal to $\tau$ is finite, for any $\tau >0$. 

We say that the LDS $\mathcal{A}$ is \emph{conservative} if 
$$ | \det\Phi_{A^{\star}}^t(x) | = \frac{\|X(x)\|}{\|X(X^t(x))\|},\,\, \forall x \in \Sigma.$$
We observe that from Liouville's formula it follows that 
\begin{equation}\label{LF}
\det\Phi_{A^{\star}}^t(x) = \exp\left(\int_0^t tr(A^{\star}(X^s(x)))ds\right).
\end{equation}

A  LDS $\mathcal{B}=(\Sigma, X^t, N_{\Sigma}, B^{\star})$  is a \emph{conservative perturbation} of a bounded $\mathcal{A}$ if, for every $\epsilon >0$, $\|A^{\star}_x - B^{\star}_x\| < \epsilon$,  up to points $x$ belonging to a finite number of orbits, and $\mathcal{B}$ is conservative. In view of (\ref{LF}) it follows that $\mathcal{B}$ is conservative if and only if $tr(B^{\star})=tr(A^{\star})$.

A direct application of the Gronwall inequality gives that $$\|\Phi_{A^{\star}}^t(x)-\Phi_{B^{\star}}^t(x)\| \leq \exp(K|t|)\|A^{\star}_x - B^{\star}_x\|.$$
In particular $\Phi_{B^{\star}}^1$ is a perturbation of $\Phi_{A^{\star}}^1$ in the sense introduced in~\cite{BGV} for the discrete case.

A bounded LDS $\mathcal{A}$ is \emph{strictly without dominated decomposition} if the only invariant subsets of $\Sigma$ that admit a dominated splitting for $\Phi_{A^{\star}}^t$ are finite sets.

Let us now present a key result about linear differential systems which is the conservative version of Theorem 2.2 of~\cite{BGV}.

\begin{theorem}\label{2.2}
Let $\mathcal{A}$ be a conservative, large period and bounded LDS. If $\mathcal{A}$ is strictly without dominated decomposition then there exist a conservative perturbation $\mathcal{B}$ of $\mathcal{A}$ and an infinite set $\Sigma^{\prime} \subset \Sigma$ which is  $X^t$-invariant such that for every $x \in \Sigma^{\prime}$ the linear map $\Phi_{B^{\star}}^{\pi(x)}(x)$ as all eigenvalues real and with the same modulus (thus equal to $1$ or to $-1$).
\end{theorem}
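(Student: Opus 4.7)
The plan is to adapt the proof of Theorem 2.2 of \cite{BGV} to the conservative setting, imposing at every stage that the infinitesimal perturbation be trace-preserving (i.e.\ $tr(B^\star)=tr(A^\star)$), which by the Liouville formula~(\ref{LF}) is exactly the condition that the perturbed system remain conservative. I would first pass to the discrete picture by freezing at time one: the cocycle $\Phi_{A^\star}^1$ over the map $X^1$ restricted to $\Sigma$ inherits the ``large period'' and ``bounded'' hypotheses, and, thanks to the Gronwall estimate noted just after the definition of conservative perturbation, a small conservative perturbation of $\mathcal{A}$ gives rise to exactly the kind of cocycle perturbation used in the discrete BGV framework. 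The strictly-without-domination hypothesis transfers to this discrete cocycle, since a dominated splitting for $\Phi_{A^\star}^t$ is the same as one for its time-one iterate.

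Next I would run the BGV inductive scheme. By the failure of dominated decomposition over every infinite invariant subset, for every $\ell \in \mathbb{N}$ one produces periodic orbits of arbitrarily large period along which the $\ell$-domination property fails between consecutive Oseledets directions. BGV's local perturbation lemma then exhibits, inside small flowboxes along such an orbit, an elementary rotation-type perturbation of the cocycle which mixes the two adjacent directions and brings the corresponding Lyapunov exponents closer together. The essential observation for the conservative adaptation is that these rotation-type one-parameter families lie in $SL(n-1,\mathbb{R})$: their infinitesimal generators $A_t^{\prime}A_t^{-1}$ are traceless, so the resulting perturbed infinitesimal generator $B^\star = A^\star + \mathcal{H}$ still satisfies $tr(B^\star)=tr(A^\star)$ and hence defines a conservative LDS. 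This is exactly the algebraic feature exploited already in the Main Perturbation Lemma~\ref{mpl}.

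Iterating the mixing step one by one between each pair of consecutive Lyapunov exponents, one concentrates all eigenvalues of $\Phi_{B^\star}^{\pi(x)}(x)$ at a common modulus. Because each step preserves the sum of the Lyapunov exponents, and because conservativity forces $|\det \Phi_{B^\star}^{\pi(x)}(x)|=\|X(x)\|/\|X(X^{\pi(x)}(x))\|=1$ on a periodic orbit, the common modulus is forced to be $1$. A final small traceless perturbation (again of rotation type, built exactly as in the proof of Lemma~\ref{linear2}) removes any surviving complex conjugate pairs, yielding eigenvalues equal to $\pm 1$.

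The main obstacle will be the bookkeeping needed to produce an \emph{infinite} $X^t$-invariant set $\Sigma'$ rather than just one orbit. The large-period hypothesis guarantees infinitely many orbits in $\Sigma$, and the definition of conservative perturbation allows arbitrary modification on finitely many orbits and only an $\varepsilon$-modification on the rest; one must therefore arrange a single global conservative $B^\star$ that, on each of a chosen infinite family of long orbits, agrees with the local mixing perturbations constructed above. As in \cite{BGV}, this is achieved by ordering the orbits by period, treating each long enough orbit as ``exceptional'' and performing its mixing perturbation with support in disjoint flowboxes, while letting the size of the perturbation decay with the period so that the resulting $B^\star$ is uniformly $C^0$-close to $A^\star$ outside a finite exceptional set. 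Verifying that the disjoint supports and the trace-preservation constraint are simultaneously compatible, and that the infinite set $\Sigma'$ so constructed is genuinely $X^t$-invariant, is the delicate point that I expect to require the most care.
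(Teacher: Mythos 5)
Your proposal follows essentially the same route as the paper: the authors likewise reduce to the discrete BGV scheme and observe that the elementary perturbations used there (rotations, and directional homotheties compensated so as to lie in $SL$) have trace-free infinitesimal generators, hence yield conservative perturbations of the LDS realizable by the Main Perturbation Lemma; their written treatment is no more detailed than yours, consisting of this reduction plus one illustrative lemma that realifies the eigenvalues via Lemma 6.6 of \cite{BC}. The only slip is your appeal to Lemma~\ref{linear2} for removing complex conjugate pairs --- that lemma \emph{creates} complex eigenvalues from a defective real one, whereas the realification step is the rotation argument of \cite{BC} --- but this does not affect the substance of the argument.
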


The perturbations used in the proof of Theorem 2.2 of~\cite{BGV} are rotations and directional homotheties (diagonal linear maps for a fixed basis). They are made in the linear cocycle setting and (discrete/continuous-time) Franks' Lemma allows to realize them as perturbations of a fixed diffeomorphism or vector field. Once we  have a dictionary to pass from linear cocycles (discrete case) to conservative linear differential systems (\emph{conservative} continuous-time case) and we obtained the Main Perturbation Lemma (Lemma~\ref{mpl}) which allows to realize these kind of conservative perturbations of linear differential systems as conservative perturbations of vector fields, the proof given by Bonatti, Gourmelon and Vivier can be carried on to our setting without additional obstructions. 
Therefore, to illustrate how this can be done,  we show how to perturb along a periodic orbit of a conservative vector field in order to get real eigenvalues for the linear Poincar\'e map in the period. This is obtained by first making a conservative perturbation of the  LDS associated to the orbit and then, using the Main Perturbation Lemma, realize it as a conservative perturbation of the vector field. 

\begin{lemma}
Let $X \in \mathfrak{X}_\mu^4(M)$ and fix small $\epsilon_0 >0$. There exists $\pi_0$ such that for any periodic orbit $x$ with period $\pi(x)>\pi_0$ there is $Y \in \mathfrak{X}_\mu^1(M)$ satisfying
\begin{itemize}
\item $Y$ is $\epsilon_0$-$C^1$-close to $X$;
\item $Y^t(x)=X^t(x)$, $\forall t \in \mathbb{R}$;
\item all the eigenvalues of $P_Y^{\pi(x)}$ are real, and 
\item $Y$ is equal to $X$ outside a small neighbourhood of the orbit of $x$.  
\end{itemize}
\end{lemma}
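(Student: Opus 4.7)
The strategy follows the dictionary announced at the start of this section. First, we argue at the level of linear differential systems (LDS): the natural LDS $\mathcal{A}$ associated to the periodic orbit of $x$ admits a conservative perturbation $\mathcal{B}$, supported on finitely many subarcs of the orbit, such that $\Phi_{B^{\star}}^{\pi(x)}(x)$ has only real eigenvalues. Second, we realize $\mathcal{B}$ as a bona fide vector field perturbation of $X$ via the Main Perturbation Lemma (Lemma \ref{mpl}).

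Let $\xi_0 = \xi_0(\epsilon_0, X)$ be given by Lemma \ref{mpl}. Pick $\pi_0$ large enough so that any periodic orbit of period $> \pi_0$ contains $N$ pairwise disjoint subarcs $\Gamma(p_i, \tau_i)$ with $\tau_i \in [1,2]$, whose flowboxes may be taken disjoint; here $N$ depends only on $n$, $\xi_0$ and $X$ and is specified below. For the LDS step, if $P_X^{\pi(x)}(x)$ has a complex eigenvalue pair $e^{\pm i \theta}$ with (real) invariant $2$-plane $F_x \subset N_x$, transport it to the points $p_i$ via $F_{p_i} := P_X^{t_i}(x)(F_x)$, and on each subarc apply a small rotation $R_i$ acting by angle $-\theta/N$ in $F_{p_i}$ and as the identity on a complementary invariant subspace. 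A direct telescoping computation shows that the perturbed Poincar\'e return map equals $P_X^{\pi(x)}(x) \circ \bar R_N \cdots \bar R_1$, where $\bar R_i = (P_X^{t_i}(x))^{-1} R_i\, P_X^{t_i}(x)$ acts on $F_x$ as a rotation by $-\theta/N$ with respect to the intrinsic complex structure inherited from the complex eigenvalue. Their composition acts on $F_x$ as a full rotation by $-\theta$, exactly cancelling the rotation part of $P_X^{\pi(x)}(x)$ on $F_x$ and producing two real eigenvalues on this block. Repeating the procedure for each complex eigenvalue pair, on disjoint blocks of subarcs, yields $\Phi_{B^\star}^{\pi(x)}(x)$ with all real eigenvalues.

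To realize $\mathcal{B}$ as a vector field perturbation, I would apply Lemma \ref{mpl} on each chosen subarc to a one-parameter linear family $\{A^{(i)}_t\}_{t \in [0, \tau_i]}$ interpolating smoothly from $\mathrm{Id}$ to $R_i$; taking $N$ large so that $|\theta|/N$ is below $\xi_0$ (up to an absolute constant coming from the derivative of the chosen bump function) ensures the bound $\|(A^{(i)}_t)'(A^{(i)}_t)^{-1}\| < \xi_0$. Disjointness of the flowboxes allows us to concatenate the $N$ modifications into a single $Y \in \mathfrak{X}_\mu^1(M)$ satisfying the four listed properties, with $P_Y^{\pi(x)}(x) = \Phi_{B^\star}^{\pi(x)}(x)$ having only real eigenvalues.

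The main obstacle I foresee is the verification of the telescoping identity and the identification of $\bar R_i$ as a rotation with respect to the correct complex structure on $F_x$: this is the conservative analogue of the cocycle computation in \cite{BGV}. Once established, conservativity of the perturbation is immediate (rotations lie in $SL(n-1, \mathbb{R})$), and the extension to several complex eigenvalue pairs simply requires more subarcs, hence a correspondingly larger $\pi_0$.
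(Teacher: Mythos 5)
There is a genuine gap, and it sits exactly where you flagged it: the ``telescoping cancellation'' does not work. The telescoping identity itself is fine --- perturbing each arc via Lemma~\ref{mpl} does yield $P_Y^{\pi(x)}(x)=P_X^{\pi(x)}(x)\circ \bar R_N\circ\cdots\circ\bar R_1$ with $\bar R_i=(P_X^{t_i}(x))^{-1}\circ R_i\circ P_X^{t_i}(x)$ --- but the claim that $\bar R_i$ is a rotation by $-\theta/N$ \emph{with respect to the complex structure on $F_x$ determined by the eigenvalue of the return map} is false. Conjugation preserves the trace, so $\bar R_i$ is elliptic with rotation angle $-\theta/N$ relative to \emph{its own} invariant conformal structure, namely the one pulled back by $(P_X^{t_i}(x))^{-1}$ from $F_{p_i}$; these structures differ from point to point because the transition maps $P_X^{t_i}(x)|_{F_x}$ are not conformal. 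A product of elliptic elements with the same rotation angle but different invariant conformal structures need not be elliptic at all, so the composition $\bar R_N\cdots\bar R_1$ does not cancel the rotation part of $P_X^{\pi(x)}(x)|_{F_x}$. If instead you define $R_i$ so that $\bar R_i$ \emph{is} the desired rotation in the eigen-coordinates at $x$, then $\|R_i-\mathrm{Id}\|$ is of order $\kappa(t_i)\,|\theta|/N$, where $\kappa(t_i)$ is the condition number of $P_X^{t_i}(x)|_{F_x}$ in the chosen bases; this can grow exponentially in $t_i\le\pi(x)$ while $N$ is at most of order $\pi(x)$, so the hypothesis $\|(A^{(i)}_t)'(A^{(i)}_t)^{-1}\|<\xi_0$ of Lemma~\ref{mpl} cannot be guaranteed.

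The missing ingredient is Lemma~6.6 of Bonatti--Crovisier \cite{BC}: for every $\delta>0$ there is $N(\delta)$ such that for any $k>N(\delta)$ and any $C_1,\dots,C_k\in SL(2,\mathbb{R})$ one can find rotations $R_{\theta_j}$ with $|\theta_j|<\delta$ making $C_k R_{\theta_k}\cdots C_1 R_{\theta_1}$ have real eigenvalues. This is a nontrivial statement precisely because the naive distribution of the total rotation fails; its proof is not a cancellation argument. The paper's proof cuts the orbit into $k=[\pi(x)]$ unit-time arcs, normalizes the restrictions of $P_X^1$ to the invariant $2$-plane by their determinants to obtain matrices $C_j\in SL(2,\mathbb{R})$, invokes \cite{BC} to get the small rotations, and only then realizes each $R_{\theta_j}$ by Lemma~\ref{mpl} exactly as you describe in your realization step (which is correct, including the conservativity remark and the choice of the complementary subspace via the Jordan form so as not to disturb the other eigenvalues). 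So your second half is sound, but the first half needs to be replaced by an appeal to \cite{BC} (or an equivalent argument), with $\pi_0\ge N(\delta)$ for $\delta\le\tfrac12\xi_0$.
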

\begin{proof}

Let us fix a small $\delta >0$.

Let $R_{\theta}$ denote the rotation of angle $\theta$ in the plane. Lemma 6.6 of~\cite{BC} assures that there exists $N=N(\epsilon) \in \mathbb{N}$ satisfying the following:  for any $k>N$ and for any $C_1$, $C_2$,...,$C_k \in SL(2,\mathbb{R})$ there are rotations $R_{\theta_1}$,   $R_{\theta_2}$,...,$R_{\theta_k}$, with $|\theta_j|< \delta$ for all $j \in \{1,2,...,k\}$, such that the linear map 
$$ C_k \circ R_{\theta_k} \circ C_{k-1} \circ R_{\theta_{k-1}} \circ ... \circ C_1 \circ R_{\theta_1}$$ has real eigenvalues.

Let us fix a periodic orbit $\gamma$ and $x \in \gamma$ with $\pi(x) \geq N$. We assume that $P_X^{\pi(x)}(x)$ has a complex eigenvalue  associated to a two dimensional invariant subspace $V_x \subset N_x$. Assuming that $\pi(x)=k \in \mathbb{N}$, we consider the linear maps $C_j \colon V_{X^{j-1}(x)} \rightarrow V_{X^{j}(x)}$ defined by 
$$C_j=\frac {1}{\det(P_X^1(X^{j-1}(x))|_{V_{j-1}})}P_X^1(X^{j-1}(x))|_{V_{j-1}},$$ 
where   $V_j=P_X^j(V_x)$ and $j \in \{1,2,...k\}$. If $\pi(x) \notin \mathbb{N}$ we take $k=[\pi(x)]$, consider $C_1,...,C_{k-1}$ as before and define $C_k \colon V_{X^{k-1}(x)} \rightarrow V_{X^{\pi(x)}(x)}=V_{x}$ by 
$$C_k=\frac {1}{\det(P_X^{1+\pi(x)-k}(X^{k-1}(x))|_{V_{k-1}})}P_X^{1+\pi(x)-k}(X^{k-1}(x))|_{V_{k-1}}.$$ 
In what follows, without loss of generality, we assume that $\pi(x)=k \in \mathbb{N}$. 

We observe that each $C_j$ can be identified with a linear map of $SL(2, \mathbb{R})$ and that $P_X^{\pi(x)}(x)=C_k \circ C_{k-1}\circ...\circ C_1$. Therefore, Lemma 6.6 of~\cite{BC} gives a family of  rotations $R_{\theta_1}$,   $R_{\theta_2}$,...,$R_{\theta_k}$ with the properties described above. 

Now we want to apply  Lemma~\ref{mpl} to each arc $\Gamma(X^{j-1}(x),1)$ and to the maps $C_j$ and $R_{\theta_j}$. For that we consider $V=V_{j-1}$ and choose an appropriate $ V_{j-1}^{\prime}$ using the Jordan canonical form so that the perturbation we will construct do not change the other eigenvalues of $P_X^{\pi(x)}$. Then, for each $j \in \{1,2,...,k\}$, we define the one-parameter linear family $\{R_{j,t}\}_{t\in \mathbb{R}}$ associated to $\Gamma(X^{j-1}(x), 1)$ and $V_j$  by
\begin{itemize}
\item $R_{j,t} \colon N_{X^{j-1}(x)} \rightarrow N_{X^{j-1}(x)}$ is a linear map, for all $t\in \mathbb{R}$,
\item $R_{j,t}=Id$, for all $t\leq 0$, and $R_{j,t}=R_{j,1}$, for all $t\geq 1$,
\item $R_{j,t}|_{V_j^{\prime}}\equiv Id$, $\forall t \in [0,1]$, and
\item $R_{j,t}|_{V_j}=R_{\alpha_j(t)\theta_j}$, where $\alpha_j$ is a $C^{\infty}$ bump function with $\alpha_j(t)=0$ for $t \leq 0$, $\alpha_j(t)=1$ for $t \geq 1$, and $0 \leq \alpha_j^{\prime}(t) \leq 2$, for all $ t \in \mathbb{R}$.
\end{itemize}
A direct computation gives that $\|R_{j,t}^\prime \circ R_{j,t}^{-1}\|=\alpha^{\prime}(t) \theta_j \leq 2 \epsilon$. Therefore we fix $\delta \leq \frac{1}{2}\xi_0(\epsilon_0 /n, X)$, where $n$ is the dimension of $M$ and  $\xi_0(\epsilon_0 /n, X)$ is given by Lemma~\ref{mpl}; thus, applying this lemma we get divergence-free vector fields $Y_1$,...,$Y_{k}$, each one $\epsilon_0/n$-$C^1$-close to $X$  and such that 
$P_{Y_j}^1(X^{j-1}(x))= P_X^1(X^{j-1}(x)) \circ R_{\theta_j}$, for $j \in \{1,2,...,k\}$. It follows from this construction that these vector fields \emph{glue} together defining  a $C^1$ vector field $Y \in  \mathfrak{X}_\mu^1(M)$, $\epsilon_0/n$-$C^1$-close to $X$, and such that 
\begin{eqnarray*}
P_Y^{\pi(x)}(x)|_{V_x}&=&(P_X^1(X^{k-1}(x)) \circ R_{\theta_k}) \circ  ... \circ (P_X^1(x) \circ R_{\theta_1})\\
&=&(C_{k} \circ R_{\theta_k}) \circ (C_{k-1} \circ R_{\theta_{k-1}}) \circ ... \circ (C_1 \circ R_{\theta_1}), 
\end{eqnarray*}
therefore this linear map has real eigenvalues.
Finally, we apply these arguments at most $[\frac{n}{2}]$ times (corresponding to the maximal number of two-dimensional eigenspaces associated to complex eigenvalues) to get the vector field $Y$ satisfying the conditions of the lemma.
\end{proof}
\begin{remark} In the previous lemma we can assure that the eigenvalues of $P_Y^{\pi(x)}$ are all real and with different modulus. In fact this can be achieved by adding small directional homotheties in the two-dimensional vector spaces $V_x$.

\end{remark}

\end{subsection}

\begin{subsection}{Proof of Proposition~\ref{VF}}

We first note that it is not difficult to see that once we obtain the conclusions of the proposition for a robustly transitive vector field $X$ then they also hold for $Y$ in a small neighbourhood $\mathcal{U}$ of $X$ with the same $\ell$ and $\varrho$. 

So let us fix a robustly transitive $X \in \mathfrak{X}_\mu^4(M)$ and a positive number $\epsilon$ such that if $Y \in \mathfrak{X}_\mu^1(M)$ and is $\epsilon$-$C^1$-close to $X$ then it is robustly transitive. We assume that the proposition does not hold for $X$. Therefore for each $\ell \in \mathbb{N}$ there exists  a periodic point $x_{\ell}$, with period $\pi_{\ell}=\pi(x_{\ell})\geq \ell $ and such that the orbit $\Gamma(x_{\ell}, \pi_{\ell})$ does not admit an $\ell$-dominated splitting.

Define $$\Sigma=\bigcup_{\ell \in \mathbb{N}}  \Gamma(x_{\ell}, \pi_{\ell}),$$ and consider the linear differential system $\mathcal{A}=(\Sigma, X^t, N_{\Sigma}, \Pi \circ DX)$, that is $A^{\star}=\Pi \circ DX$ . As $M$ is compact and $X$ is $C^1$ $\mathcal{A}$ is bounded and, by construction, it  is  conservative and a large period system. 

Now we show that $\mathcal{A}$ is strictly without dominated decomposition. In fact let us assume that there exists an $X^t$-invariant and not finite  set $\Sigma_0 \subset \Sigma$ such that $P_X^t$ admits an $L_0$ dominated splitting over $\Sigma_0$, say $N_{\Sigma_0}=E \oplus F$. It follows that there is $L_1>L_0$ such that $E \oplus F$ is an $L$-dominated splitting for the linear Poincar\'e map, for any $L \geq L_1$. As $\Sigma_0$ is not finite there exists $\ell_0>L_1$ such that $\Sigma_1=\Gamma(x_{\ell_0}, \pi_{\ell_0}) \subset \Sigma_0$. The set $\Sigma_1$ is an $X^t$-invariant subset of $\Sigma_0$ therefore, by choice of $L_1$, it admits an $L$- dominated splitting for any $L \geq L_1$; by other side as $\pi_{\ell_0} \geq \ell_0 >L_1$ it follows that $\Sigma_1$ does not admit an $\ell_0$-dominated splitting, which is a contradiction.

Now we can apply Theorem~\ref{2.2} in order to get a conservative perturbation $\mathcal{B}$ of $\mathcal{A}$ and an infinite set $\Sigma^{\prime} \subset \Sigma$ which is  $X^t$-invariant such that for every $x \in \Sigma^{\prime}$ the linear map $\Phi_{B^{\star}}^{\pi(x)}(x)$ as all eigenvalues real and with the same modulus (thus equal to $1$ or to $-1$). As $\mathcal{B}$ is a perturbation of $\mathcal{A}$, for any small $\eta_0>0$  there exits $\overline{x} \in \Sigma^{\prime}$ such that $\|B^{\star}_{X_t(\overline{x})}-A^{\star}_{X_t(\overline{x})}\|<\eta_0$, for all $t \in \mathbb{R}$. We observe that the period of $\overline{x}$, $\pi(\overline{x})$ tends to infinity as  $\eta_0$ goes to zero.

Now we construct a new vector field $Y \in \mathfrak{X}_\mu^1(M)$ such $Y$ is $\epsilon$-$C^1$-close to $X$, $Y^t(\overline{x})=X^t(\overline{x})$, for all $t \in \mathbb{R}$ and $P_Y^{\pi(\overline{x})}(\overline{x})=\Phi_{B^{\star}}^{\pi(\overline{x})}(\overline{x})$, in particular the linear Poincar\'e map $P_Y^{\pi(\overline{x})}$ has only eigenvalues equal to $1$ or $-1$. Once we get this vector field we apply Lemma~\ref{linear2} to get a  new vector field $Z \in \mathfrak{X}_\mu^1(M)$, arbitrarily $C^1$-close to $Y$ and having a $Z^t$-invariant tubular neighbourhood of the orbit of $\overline{x}$, which  contradicts the fact that $Y$ is robustly transitive thus ending the proof of Proposition~\ref{VF}.

Therefore it remains to explain how we obtain the mentioned vector field $Y$.  Consider the arcs of trajectory $$\Gamma_j=\bigcup_{t \in[0,1]}X^t(X^j(\overline{x})), \text{ for } j \in \{0,..., [\pi(x)]-2\},$$
and 
$$\Gamma_{[\pi(x)]-1}=\bigcup_{t \in[0,1+\pi(\overline{x})-[\pi(\overline{x}]]}X^t(X^j(\overline{x})).$$
For  $j \in \{0,..., [\pi(x)]-1\}$ we write $\Phi_{B^{\star}}^1(X^j(\overline{x}))=P_X^1(X^j(\overline{x})) \circ A^j$, where $A^j={(P_X^1(X^j(\overline{x})))}^{-1} \circ \Phi_{B^{\star}}^1(X^j(\overline{x}))$, and consider a continuous one-parameter family $\tilde{A}_t^j$ defined by 
\begin{itemize}
\item $\tilde{A}_t^j=Id$ for $t \leq 0$;  
\item $\tilde{A}_t^j=A^j$ for $t \geq 1$ (for $t \geq  1+\pi(\overline{x})-[\pi(\overline{x})]$, if $j=[\pi(\overline{x})]-1$), and 
\item $\tilde{A}_t^j={(P_X^t(X^j(\overline{x})))}^{-1} \circ \Phi_{B^{\star}}^t(X^j(\overline{x}))$, for $0 \leq t \leq 1$ (or for $0 \leq t \leq 1+\pi(\overline{x})-[\pi(\overline{x})]$, if $j=[\pi(\overline{x})]-1$).
\end{itemize}
We fix small $\tilde{\delta}>0$ and consider $A_t^j$  a one-parameter linear family (see Section~\ref{pl}), arbitrarily close to $\tilde{A}_t^j$,  such that $A_t^j=Id$ for $t \leq \tilde{\delta}$ and $A_t^j=\tilde{A}_t^j$ for $t \geq 1$ (or $t \geq 1+\pi(\overline{x})-[\pi(\overline{x})]$, if $j=[\pi(\overline{x})]-1$). Now, we observe that $\| (A_t^j)^{\prime} (A_t^j)^{-1}\|$ is of order
$$\|((P_X^{t}(X^j(\overline{x})))^{-1} \circ \Phi_{B^{\star}}^t(X^j(\overline{x})))^{\prime} \circ ((P_X^{t}(X^j(\overline{x})))^{-1} \circ \Phi_{B^{\star}}^t(X^j(\overline{x})))^{-1}\|,$$
therefore of order
$$\mathcal{O}(\overline{x}) =\max_{y \in \Gamma(\overline{x}, \pi(\overline{x}))}\|B^{\star}_y - A^{\star}_y \|.$$
As we mention before $\overline{x} \in \Sigma^{\prime}$ can be chosen such that $\mathcal{O}(\overline{x})$ is arbitrarily small. Therefore, fixing $0 < \delta < \tilde{\delta}$ and small $r>0$, to each arc $\Gamma_j$ we apply Lemma~\ref{mpl} to get a new vector field $Y_j \in \mathfrak{X}_\mu^1(M)$, $\epsilon$-$C^1$-close to $Y$ and such that
\begin{enumerate}
\item $Y_j^t(\overline{x})=X_j^t(\overline{x})$, $\forall t \in \mathbb{R}$;
\item $P_{Y_j}^1(X^j(\overline{x}))=\Phi_{B^{\star}}^1(X^j(\overline{x}))$, and 
\item $Y_j|_{\mathcal{T}_j^c}=X|_{\mathcal{T}_j^c}$, where $\mathcal{T}_j=\mathcal{T}(X^j(\overline{x}), \tau, r, \delta)$ and $\tau = 1$ (or $\tau=1+\pi(\overline{x})-[\pi(\overline{x})]$  if $j=[\pi(\overline{x})]-1$).
\end{enumerate}
Finally, as $ \delta < \tilde{\delta}$,  by construction it follows that 
$$Y_j|_{\mathcal{T}_j \cap \mathcal{T}_{j+1}}=Y_{j+1}|_{\mathcal{T}_j \cap \mathcal{T}_{j+1}},\,\, \forall j \in \{0,1,..., [\pi(\overline{x})]-1\},$$ which, together with item $3.$ above, implies that these vector fields can be \emph{glued} to obtain a $C^1$ vector field $Y \in  \mathfrak{X}_\mu^1(M)$, $\epsilon$-$C^1$-close to $X$ and such that
$Y|_{\mathcal{T}_j}=Y_j|_{\mathcal{T}_j}$, for all $j \in \{0,1,..., [\pi(\overline{x})]-1\}$. Thus $Y^t(\overline{x})=X^t(\overline{x})$, for all $t \in \mathbb{R}$ and $P_Y^{\pi(\overline{x})}(\overline{x})=\Phi_{B^{\star}}^{\pi(\overline{x})}(\overline{x})$ as required. This ends the proof of Proposition~\ref{VF}.

\end{subsection}

\end{section}

\flushleft
{\bf M\'ario Bessa} \ \  (bessa@fc.up.pt)\\
CMUP, Rua do Campo Alegre, 687 \\ 4169-007 Porto \\ Portugal\\

\medskip

\flushleft
{\bf Jorge Rocha}  \ \  (jrocha@fc.up.pt)\\
Dep. Matem\'atica Pura-FCUP, Rua do Campo Alegre, 687 \\ 4169-007 Porto \\ Portugal\\

\end{document}